\newtheorem{theo}{Theorem}[section]
\newtheorem{lemma}[theo]{Lemma}
\newtheorem{prop}[theo]{Proposition}
\newtheorem{cor}[theo]{Corollary}
\newtheorem*{theononumber}{Theorem}
\newtheorem{lem}[theo]{Lemma}
\theoremstyle{definition}
\newtheorem{rem}[theo]{Remark}
\newtheorem{defi}[theo]{Definition}
\renewcommand{\labelenumi}{{\rm (\roman{enumi})}}
\newcommand{\f}{\phi}
\newcommand{\spec}{\operatorname{Spec}}
\newcommand{\gal}{\operatorname{Gal}}
\newcommand{\Gl}{\operatorname{GL}}
\newcommand{\Aut}{\operatorname{Aut}}
\newcommand{\trdeg}{\operatorname{trdeg}}
\newcommand{\id}{\operatorname{id}}
\newcommand{\C}{\mathbb{C}}
\newcommand{\Kb}{{\bar{K}}}
\newcommand{\Ind}{\operatorname{Ind}}
\newcommand{\del}{\partial}
\newcommand{\Frac}{\ensuremath{\mathrm{Frac}}}
\newcommand{\rank}{\operatorname{rank}}
\newcommand{\GL}{\operatorname{GL}}
\newcommand{\Spec}{\operatorname{Spec}}
\title{The differential Galois group of the rational function field}
\author{Annette Bachmayr, David Harbater, Julia Hartmann and Michael Wibmer}
\date{January 12, 2021}
\thanks{The first author was funded by the Deutsche Forschungsgemeinschaft (DFG) - grants MA6868/1-1, {MA6868/1-2}. The second and third authors were supported on NSF grants DMS-1463733 and DMS-1805439.  The fourth author was supported by the NSF grants DMS-1760212, DMS-1760413, DMS-1760448 and the Lise Meitner grant M 2582-N32 of the Austrian Science Fund FWF.  We also acknowledge support from NSF grant DMS-1952694}
\subjclass[2010]{12H05, 12F12, 34M50, 14L15}
\keywords{Picard-Vessiot theory, differential algebra, inverse differential Galois problem, embedding problems, linear algebraic groups, proalgebraic groups}
\begin{document}
\maketitle

\begin{abstract}
We determine the absolute differential Galois group of the field $\C(x)$ of rational functions: It is the free proalgebraic group on a set of cardinality $|\C|$. This solves a longstanding open problem posed by B.H. Matzat. For the proof we develop a new characterization of free proalgebraic groups in terms of split embedding problems, and we use patching techniques in order to solve a very general class of differential embedding problems.  Our result about $\C(x)$ also applies to rational function fields over more general fields of coefficients.
\end{abstract}

\section{Introduction}

The differential Galois group of a linear differential equation is a linear algebraic group that measures the algebraic relations among the solutions: the larger the group, the fewer relations there are. This group is central for analyzing algebraic properties of the solutions.

One of the guiding problems in the Galois theory of differential equations is the so-called inverse problem. It asks which linear algebraic groups occur as differential Galois groups over a given differential field $F$. In the classical case $F=\C(x)$, the inverse problem was solved by C. Tretkoff and M. Tretkoff (\cite{TretkoffTretkoff:SolutionOfTheInverseProblemOfDifferentialGaloisTheoryInTheClassicalCase}) based on J. Plemelj's solution of Hilbert's 21st problem: Every linear algebraic group is a differential Galois group over $\C(x)$. 

There are various directions in which the inverse problem can be strengthened. For example,
one may ask
how many essentially different ways a given linear algebraic group $G$ occurs as a differential Galois group over $\C(x)$. More precisely, one may ask to determine the cardinality $\kappa_G$ of the set of isomorphism classes of Picard-Vessiot extensions of $\C(x)$ with differential Galois group isomorphic to $G$. Here a Picard-Vessiot extension is the differential analog of a finite Galois extension of fields in usual Galois theory. The solution of the inverse problem simply states that $\kappa_G\geq 1$ for any linear algebraic group $G$. Since there are only $|\C(x)|=|\C|$ linear differential equations over $\C(x)$, one trivially has $\kappa_G\leq |\C|$. J. Kovacic showed (\cite{Kovacic:TheInverseProblemInTheGaloisTheoryOfDifferentialFields}) that $\kappa_G=|\C|$ when $G$ is a connected solvable linear algebraic group. Our main result (see below) implies that indeed $\kappa_G=|\C|$ for any non-trivial linear algebraic group $G$.

The absolute differential Galois group of a differential field $F$ is a proalgebraic group that governs the linear part of the differential field arithmetic of $F$. 
This group is the projective limit of all differential Galois groups of all linear differential equations with coefficients in $F$. 
A linear algebraic group is a differential Galois group over $F$ if and only if it is a quotient of the absolute differential Galois group of $F$. Thus the problem of determining the absolute differential Galois group can also be seen as a strengthening of the inverse problem.

For the differential fields 
of formal and convergent Laurent series,
the absolute differential Galois group is known (\cite[Chapter 10]{SingerPut:differential}). On the other hand, in the classical case of the rational function field $F=\C(x)$, no explicit description of the absolute differential Galois group has been known so far. During the 1999 MSRI program \emph{Galois Groups and Fundamental Groups}, B.H. Matzat suggested that there should be a differential generalization of A. Douady's theorem (\cite{Douady}; see also \cite[Theorem 3.4.8]{Szamuely:GaloisGroupsAndFundamentalGroups}) that the absolute Galois group of $\C(x)$ is the free profinite group on a set of cardinality $|\C|$. In this paper we show that this is indeed the case (Theorem \ref{result}):

\begin{theononumber}[Matzat's conjecture for $\C(x)$]
	The absolute differential Galois group of $\C(x)$ is the free proalgebraic group on a set of cardinality $|\C|$.	
\end{theononumber}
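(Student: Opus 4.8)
The plan is to deduce the theorem from two ingredients: a purely group-theoretic criterion that recognizes free proalgebraic groups, and the solvability of a large class of differential embedding problems over $\C(x)$ obtained by patching. First I would apply the characterization of free proalgebraic groups developed in the paper: a proalgebraic group $G$ over $\C$ is free on a set of cardinality $\kappa$ (for $\kappa$ infinite) precisely when $G$ has rank $\kappa$ and every \emph{split} embedding problem for $G$ — that is, every pair consisting of a surjection $\beta\colon G\twoheadrightarrow H$ and a surjection $\alpha\colon\Gamma\twoheadrightarrow H$ of proalgebraic groups admitting a section, with $\ker\alpha$ of rank at most $\kappa$ — admits a proper (surjective) solution $\gamma\colon G\to\Gamma$ with $\alpha\gamma=\beta$. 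This is the proalgebraic analogue of the Iwasawa and Mel'nikov recognition theorems for free profinite groups, and in particular it forces $G$ to be projective. It reduces the theorem to two tasks: (a) computing the rank of the absolute differential Galois group $G$ of $\C(x)$, and (b) solving every split differential embedding problem of the above kind over $\C(x)$.

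\textbf{Rank count.}
The rank of $G$ is at most $|\C|$, since $|\C(x)|=|\C|$ bounds the number of linear differential equations over $\C(x)$, hence the number of Picard--Vessiot extensions, and $G$ is the inverse limit over a directed set of this size of linear algebraic groups of bounded rank. That the rank equals $|\C|$ follows once sufficiently many embedding problems have been solved (alternatively from Kovacic's realization of $\Ga$ over $\C(x)$ in $|\C|$ pairwise non-isomorphic ways).

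\textbf{Differential embedding problems via patching.}
Under the differential Galois correspondence, a split embedding problem as above translates into the following: given a Picard--Vessiot extension $E/\C(x)$ with group $H$ and a proalgebraic group $\Gamma$ equipped with a split surjection $\Gamma\twoheadrightarrow H$ of kernel $N$, construct a Picard--Vessiot extension $L\supseteq E$ of $\C(x)$, with constants still $\C$, whose group is $\Gamma$ and which induces the prescribed data. I would build $L$ by patching: decompose the line into pieces, keep over one piece the $\Gamma$-extension obtained from $E$ through the splitting $H\to\Gamma$, realize $N$ as a differential Galois group over a second, ``local'' piece — using the solution of the inverse problem over such local fields (or over $\C(x)$ itself) together with an induction that presents the proalgebraic $N$ as an inverse limit of its linear algebraic quotients — and then glue the two. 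The section of $\Gamma\twoheadrightarrow H$ is exactly what makes the patching data compatible, and faithfulness of the local realization of $N$ makes the resulting solution proper. This can be carried out over $\C(x)$ directly, or over an auxiliary field such as $\C((t))(x)$ followed by a specialization of $t$; the statement over coefficient fields other than $\C$ then comes from isolating the properties of $\C$ actually used (algebraically closed, characteristic zero, sufficiently large cardinality) together with a descent argument.

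\textbf{Main obstacle.}
The crux is the patching step: pushing differential patching far enough to solve \emph{arbitrary} split embedding problems — in particular with proalgebraic, infinite-dimensional kernels rather than merely linear algebraic ones — while ensuring that the patched object is genuinely a Picard--Vessiot extension, adjoins no new constants, and carries the \emph{full} group $\Gamma$ (so that the solution is proper). Establishing the split-embedding-problem criterion in the proalgebraic category, i.e.\ the correct analogue of the profinite recognition theorems, is the other genuinely new component, but it is the patching where I expect the main technical difficulty to lie.
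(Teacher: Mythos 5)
Your overall route is the one the paper takes: recognize freeness via a rank count plus solvability of split embedding problems, solve the split differential embedding problems by patching over a diamond built from $\C((t))(x)$, and descend/specialize back to $\C(x)$. But there is a genuine gap in the group-theoretic criterion you invoke. You assert that freeness of a proalgebraic group of rank $\kappa$ is equivalent to proper solvability of all \emph{split} embedding problems with kernel of rank at most $\kappa$, and that this ``in particular forces $G$ to be projective.'' In the profinite setting one can dominate an arbitrary finite embedding problem by a split one using the (twisted) wreath product, so split solvability does yield projectivity there; but that construction does not carry over to proalgebraic groups (the relevant product over the points of an infinite algebraic group is not an affine group scheme of finite type). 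The paper instead treats projectivity as a \emph{separate} hypothesis: the criterion actually proved (Proposition \ref{prop: group theoretic reduction to algebraic and split} together with \cite[Theorem~3.42]{Wibmer:FreeProalgebraicGroups}) is that $\Gamma$ is free iff (i) every algebraic embedding problem has a \emph{weak} solution and (ii) every split \emph{admissible} embedding problem has a proper solution, where admissible means the kernel is \emph{algebraic} and $\rank(H)<\rank(\Gamma)$. The weak solution from (i) is exactly what is used (Lemma \ref{lem: domination}) to dominate an arbitrary admissible problem by a split one, via the semidirect product $N\rtimes\phi(\Gamma)$. On the differential side, (i) is supplied by the prior solution of all finite-type differential embedding problems over $\C(x)$ (\cite[Cor.~4.6]{BachmayrHartmannHarbaterPopLarge}); your proposal never isolates this input, and without it your criterion is unproven.

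The second, related divergence is in the patching step. Because your criterion allows proalgebraic kernels, you propose to realize a proalgebraic $N$ locally as an inverse limit of its algebraic quotients. The paper's reduction makes this unnecessary and, more importantly, arranges the limit on the \emph{other} factor: the kernel $N$ stays algebraic throughout, and the proalgebraic $H$ is handled by finding a cofinal family of coalgebraic subgroups $U\trianglelefteq H$ acting trivially on $N$ (Lemma \ref{lemma: safe}), so that $G/U=N\rtimes(H/U)$ and one patches a compatible system of finite-type solutions $S_U$ (Lemma \ref{lemma: reduce split to finite type}, Theorem \ref{theo: main patching}). The real technical content there is verifying the compatibility of the patched solutions across the inverse system --- conditions (i)--(iii) of Lemma \ref{lemma: reduce split to finite type}, checked by tracking the explicit form $S_U=\Theta_{1U}(S_{1U})\cap\Theta_{2U}(S_{2U})\subseteq F_0[G/U]$ --- which your sketch does not engage with. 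Your one-line mention of specialization also hides a substantive step (Proposition \ref{prop: descent for embedding problems}): the descent from $\C((t))(x)$ to $\C(x)$ is a cardinality argument (descend the problem to a small algebraically closed subfield, solve over a small extension, re-embed into $\C$) that genuinely requires $\C$ to be uncountable.
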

In fact, our theorem provides another proof of Douady's result as well; see Remark~\ref{rem: Matzat implies Douady}.  

Previously known freeness results for absolute differential Galois groups include the case of prounipotent extensions. Free prounipotent proalgebraic groups were introduced and characterized in terms of embedding problems in \cite{LubotzkyMagid:CohomologyOfUnipotentAndPropunipotentGroups}. Based on this characterization, A.~Magid has recently shown (\cite{Magid:TheDifferentialGaloisGroupOfTheMaximalProunipotentExtensionIsFree}) that the maximal prounipotent quotient of \emph{any} absolute differential Galois group is free.

In \cite{BachmayrHarbaterHartmannWibmer:FreeDifferentialGaloisGroups}, the authors were able to prove the analog of Matzat's conjecture for a countable algebraically closed field of infinite transcendence degree and characteristic zero in place of $\C$.

In this article we in fact prove Matzat's conjecture for any \emph{uncountable} algebraically closed field of characteristic zero in place of $\C$; and so, in combination with loc.~cit., obtain the case of algebraically closed fields of infinite transcendence degree over $\mathbb{Q}$ (Theorem 5.6). We note that the proofs in the countable and the uncountable case are quite different. On the one hand, the characterization of the freeness in terms of embedding problems is much simpler in the countable case; on the other hand, the specialization argument (Proposition \ref{prop: descent for embedding problems}) that we use in the uncountable case only works for uncountable fields. 

Our proof of Matzat's conjecture is based on the study of certain differential embedding problems that we solve using patching techniques. A differential embedding problem of finite type over $\C(x)$ is a Picard-Vessiot extension $L/\C(x)$ with differential Galois group $H$ together with a surjective morphism $G\to H$ of linear algebraic groups. A solution is a Picard-Vessiot extension $M/\C(x)$ with differential Galois group $G$ together with an embedding of $L$ into $M$ that is compatible with the morphism $G\to H$. Note that the special case $H=1$ corresponds to the inverse problem. Solutions to differential embedding problems tell us not only which linear algebraic groups occur as differential Galois groups but also how these groups fit together in the projective system that defines the absolute differential Galois group.

 It was shown in \cite{BachmayrHarbaterHartmannWibmer:DifferentialEmbeddingProblems} (and in \cite{BachmayrHartmannHarbaterPopLarge} for more general fields than $\C$) that every differential embedding problem of finite type over $\C(x)$ has a solution. 
However, to establish Matzat's conjecture a significantly stronger result is needed. In fact, it was shown in \cite{BachmayrHarbaterHartmannWibmer:FreeDifferentialGaloisGroups} that Matzat's conjecture is equivalent to the statement that every differential embedding problem of finite type over $\C(x)$ has $|\C|$ independent solutions; i.e., there exist $|\C|$ solutions that are linearly disjoint over the given extension~$L$.

Another path to establish Matzat's conjecture via solving differential embedding problems is to relax the finite type assumption, and this is the one we will follow in this article. Instead of a surjective morphism $G\to H$ of linear algebraic groups, one allows a surjective morphism of proalgebraic groups with algebraic kernel.  And instead of taking a single Picard-Vessiot extension $L/\C(x)$, one allows a direct limit of Picard-Vessiot extensions, where the limit it taken over a directed set of cardinality less than $|\C|$. We call differential embedding problems of this kind admissible. In fact, Matzat's conjecture is also equivalent to the statement that all admissible differential embedding problems over $\C(x)$ have a solution (loc.~cit.). Two key ingredients of our proof of the latter condition are:

\begin{itemize}
	\item A new characterization of free proalgebraic groups in terms of embedding problems that relies on and improves the characterization from \cite{Wibmer:FreeProalgebraicGroups}. The crucial point here is that to prove Matzat's conjecture we can reduce to solving split admissible differential embedding problems; i.e., we can assume that the morphism $G\to H$ has a section.
	\item To solve these split admissible differential embedding problems, we generalize the patching techniques from \cite{BachmayrHarbaterHartmannWibmer:DifferentialEmbeddingProblems} and \cite{BachmayrHartmannHarbaterPopLarge}, where only differential embedding problems of finite type were considered.
\end{itemize}

The idea to prove the freeness of a Galois group by solving embedding problems goes back to K.~Iwasawa (\cite{Iwasawa:OnSolvableExtensionsOfAlgebraicNumberFields}), who used it to prove the solvable version of a conjecture of I.~Shafarevich in inverse Galois theory.  Shafarevich's conjecture (see \cite{Harbater:ShafarevicConjecture}) states that the absolute Galois group of the maximal abelian extension $\mathbb{Q}^{\text{ab}} = \mathbb{Q}^{\text{cycl}}$ of $\mathbb{Q}$ is a free profinite group on a countably infinite set.  The function field analog of that conjecture (proven in \cite{Har95} and \cite{Pop95}) asserts the freeness of the absolute Galois group of ${\bar{\mathbb{F}}}_p(x) = \mathbb{F}_p(x)^{\text{cycl}}$, and more generally of $k(x)$ for $k$ algebraically closed of characteristic $p$.  That result, combined with the parallel between differential Galois theory in characteristic zero and usual Galois theory in characteristic $p$ (see \cite[Section~11.6]{SingerPut:differential}), provided motivation for Matzat's conjecture.

Our main theorem (Matzat's conjecture for ${\mathbb C}(x)$) can also be reformulated in the Tannakian framework.  Namely, from that perspective, it states that there is an equivalence of Tannakian categories, between the category of finite dimensional differential modules over $\C(x)$ and that of cofinite representations of the free group on $|\C|$ elements.

\medskip

{\bf Structure of the manuscript.} In Section~\ref{ebp}, after recalling the definition of free proalgebraic groups, we provide a new characterization of these groups in terms of algebraic embedding problems and split admissible embedding problems. In Section~\ref{sect: dgt} we recall basic definitions and results from differential Galois theory and show that to prove Matzat's conjecture, it suffices to solve split admissible differential embedding problems. In Section~\ref{sec: Solving differential embedding problems} we establish a patching setup that we then use in Section~\ref{sec: Matzat} to prove Matzat's conjecture. 

{\bf Acknowledgments.} The authors thank A. Pillay for helpful discussions.

\section{Free proalgebraic groups and embedding problems}\label{ebp}

In this section we first recall the definition of free proalgebraic groups and their characterization in terms of embedding problems from \cite{Wibmer:FreeProalgebraicGroups}. We then show that, in this context, it suffices to consider algebraic embedding problems and certain split embedding problems. 

Throughout this section we work over an arbitrary base field $K$ with algebraic closure $\Kb$.

\subsection{Proalgebraic groups} \label{proalg gps}

We begin by introducing some notation for proalgebraic groups. To simplify terminology we will use the term \emph{algebraic group} (over $K$) to mean \emph{affine group scheme of finite type} (over $K$) and the term \emph{proalgebraic group} (over $K$) to mean \emph{affine group scheme} (over $K$). 
Equivalently, a proalgebraic group is a projective limit of algebraic groups. A closed normal subgroup $N$ of a proalgebraic group $G$ over $K$ is called \emph{coalgebraic} if $G/N$ is algebraic. 

The coordinate ring (i.e., the ring of global functions) on a proalgebraic group $G$ is denoted by $K[G]$, so $G=\spec(K[G])$. We also write $K[X]$ for the global functions on an affine $K$-scheme $X$.   It is often convenient to identify a proalgebraic group $G$ with its functor of points $T\mapsto G(T)$, for $T$ a $K$-algebra. The scheme theoretic image of a morphism $\f\colon G\to H$ of proalgebraic groups is denoted by $\f(G)$. It is the smallest closed subgroup of $H$ such that $\f$ factors through the inclusion $\f(G)\to H$.

A morphism $\f\colon G\to H$ of proalgebraic groups is an \emph{epimorphism} if $\f(G)=H$. This is equivalent to the dual map $\f^*\colon K[H]\to K[G]$ being injective. It is also equivalent to the following statement: For every $K$-algebra $R$ and $h\in H(R)$, there exists a faithfully flat $R$-algebra $S$ and $g\in G(S)$ such that under $\f$ the element $g$ maps to the image of $h$ in $H(S)$.

If $G$ is a proalgebraic group over $K$ and if $F/K$ is a field extension, then we may consider the base change $G_F := F \times_K G$ and its coordinate ring $F[G_F] = F \otimes_K K[G]$.  We often suppress the base change subscript (e.g., writing $F[G]$ for $F[G_F]$), except when necessary to avoid confusion.

\medskip

A \emph{free proalgebraic group over $K$ on a set $X$} is a proalgebraic group $\Gamma=\Gamma_K(X)$ equipped with a map $\iota\colon X\to \Gamma(\Kb)$ such that 
\begin{itemize}
\item for every coalgebraic subgroup $N$ of $\Gamma$ all but finitely many elements of $X$ map into $N(\Kb)$ under $\iota$ and
\item whenever $G$ is a proalgebraic group over $K$ and $\varphi\colon X\to G(\Kb)$ is a map as above, then there exists a unique morphism $\psi\colon \Gamma\to G$ such that
$$
\xymatrix{
	X \ar^\iota[rr] \ar_\varphi[rd] & & \Gamma(\Kb) \ar^{\psi_\Kb}[ld] \\
	& G(\Kb) &	
}
$$
commutes. 
\end{itemize}
As usual, this is unique up to isomorphism. For our proof of Matzat's conjecture we will use a different characterization of free proalgebraic groups. To this end we recall some definitions from \cite{Wibmer:FreeProalgebraicGroups}.

\begin{defi}
	Let $G$ be a proalgebraic group that is not algebraic. The \emph{rank} of $G$, denoted by $\rank(G)$, is the dimension of $K[G]$ as a $K$-vector space. For a non-trivial algebraic group $G$ we set $\rank(G)=1$.  The rank of the trivial group is defined to be zero.
\end{defi}

See \cite[Prop. 3.1]{Wibmer:FreeProalgebraicGroups} for different characterizations of the rank of a proalgebraic group. If $X$ is a set with $|X|\geq |K|$ and $\operatorname{char}(K)=0$, then $\Gamma_K(X)$ is of rank $|X|$ (\cite[Cor. 3.12]{Wibmer:FreeProalgebraicGroups}). 

\begin{defi}
	Let $\Gamma$ be a proalgebraic group. An \emph{embedding problem} for $\Gamma$ consists of epimorphisms $\alpha\colon G\twoheadrightarrow H$ and $\beta\colon \Gamma\twoheadrightarrow H$ of proalgebraic groups. A \emph{(proper) solution} is an epimorphism $\f\colon\Gamma\twoheadrightarrow G$ such that $\beta=\alpha\f$, i.e.,
	$$
	\xymatrix{
		\Gamma \ar@{->>}^\beta[rd] \ar@{..>>}_\f[d]& \\
		G \ar@{->>}^-\alpha[r] & H	
	}
	$$
	commutes. A \emph{weak solution} is a morphism (not necessarily an epimorphism) $\f\colon \Gamma\to G$ such that $\beta=\alpha\f$. If $G$ (and therefore also $H$) is algebraic, the embedding problem is called \emph{algebraic}. 
	
	The embedding problem is \emph{split} if $\alpha\colon G\twoheadrightarrow H$ splits, i.e., if there exists a morphism $\alpha'\colon H\to G$ such that $\alpha\alpha'=\id_H$.

	The \emph{kernel} of the embedding problem is the kernel of $G\twoheadrightarrow H$. If the kernel is algebraic (i.e., of finite type over $K$) and $\rank(H)<\rank(\Gamma)$, the embedding problem is \emph{admissible}.
\end{defi}

\begin{defi}
	A proalgebraic group $\Gamma$ is \emph{saturated} if every admissible embedding problem for $\Gamma$ has a solution.
\end{defi}

Theorem 3.24 of \cite{Wibmer:FreeProalgebraicGroups} provides several equivalent characterizations of saturated proalgebraic groups in terms of embedding problems. Moreover, over a field of characteristic zero, a proalgebraic group $\Gamma$ with $\rank(\Gamma)\geq |K|$ is saturated if and only if it is free on a set of cardinality $\rank(\Gamma)$ (\cite[Theorem 3.42]{Wibmer:FreeProalgebraicGroups}).

\subsection{Reduction to the case of split admissible embedding problems} \label{red split adm}

The following lemma shows that, as far as weak solutions are concerned, one can restrict to the case of algebraic embedding problems.

\begin{lemma} \label{lemma: algebraic suffices for projective}
	Let $\Gamma$ be a proalgebraic group. If every algebraic embedding problem for $\Gamma$ has a weak solution, then every embedding problem for $\Gamma$ has a weak solution.
\end{lemma}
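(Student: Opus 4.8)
The plan is to reduce a general embedding problem $\alpha\colon G\twoheadrightarrow H$, $\beta\colon\Gamma\twoheadrightarrow H$ to a (co)filtered system of algebraic embedding problems and take a limit of compatible weak solutions. First I would write $G$ as a projective limit $G=\varprojlim_{i\in I} G_i$ of algebraic groups, where the $G_i$ are the quotients $G/N_i$ by the coalgebraic normal subgroups $N_i$ of $G$, with $I$ directed by reverse inclusion of the $N_i$. For each $i$ let $\pi_i\colon G\to G_i$ be the projection and $H_i:=\pi_i(\alpha^{-1}?)$ — more precisely, set $H_i$ to be the image of $H$ under the induced map; concretely, since $\ker(\alpha)$ is some closed normal subgroup $N$ of $G$, put $H_i := G_i/\pi_i(N)$ (the scheme-theoretic image), so that $\alpha$ induces an epimorphism $\alpha_i\colon G_i\twoheadrightarrow H_i$ of algebraic groups, and $H=\varprojlim_i H_i$. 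Composing $\beta$ with $H\twoheadrightarrow H_i$ gives an epimorphism $\beta_i\colon\Gamma\twoheadrightarrow H_i$, so $(\alpha_i,\beta_i)$ is an algebraic embedding problem for $\Gamma$.

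Next, by hypothesis each $(\alpha_i,\beta_i)$ has a weak solution $\f_i\colon\Gamma\to G_i$ with $\alpha_i\f_i=\beta_i$. The issue is that these $\f_i$ need not be compatible with the transition maps $G_j\to G_i$ (for $j\geq i$), so they do not automatically glue to a morphism $\Gamma\to G=\varprojlim G_i$. The standard fix is a compactness / inverse-limit argument: for each $i$ consider the set $S_i$ of weak solutions of $(\alpha_i,\beta_i)$; this is non-empty, and there are restriction maps $S_j\to S_i$ for $j\geq i$ given by post-composition with $G_j\to G_i$ (one checks this lands in $S_i$ because the $\beta$'s and $\alpha$'s are compatible). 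A weak solution of the original problem is exactly an element of $\varprojlim_i S_i$. To conclude $\varprojlim_i S_i\neq\emptyset$ one wants each $S_i$ to carry a structure making the inverse limit non-empty — the cleanest route is to observe that $\Hom(\Gamma,G_i)$ is the set of $K$-points of an affine $K$-scheme (or: $S_i$ is a coset of $\Hom(\Gamma,\ker\alpha_i)$, which itself has good finiteness/limit properties), and that the $S_i$ are non-empty affine schemes with affine transition maps, so the inverse limit of $K$-points (or $\Kb$-points) is non-empty. Alternatively, and perhaps more in keeping with the paper's functorial style, one realizes $\Hom(\Gamma,G_i)$ via the comodule/Hopf-algebra description and uses that a cofiltered limit of non-empty affine schemes over a field is non-empty.

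The main obstacle is precisely this last gluing step — upgrading a mere family of weak solutions $(\f_i)$ to a coherent one. I expect the key lemma to invoke is that for an affine group scheme $\Gamma$ and algebraic groups $G_i$, the functor $i\mapsto\Hom_{K\text{-gp}}(\Gamma,G_i)$ (with the transition maps above) forms an inverse system of non-empty sets that has surjective-enough transition maps, or can be rigidified to schemes so that $\varprojlim$ is non-empty; the fact that $\ker\alpha_i$ is algebraic is what keeps each fiber of $S_j\to S_i$ manageable. One should be slightly careful that $H=\varprojlim H_i$ genuinely holds with the chosen $H_i$ (i.e., that the coalgebraic subgroups $\pi_i(N)$ of $G_i$ give, via $H_i=G_i/\pi_i(N)$, a cofinal system of coalgebraic quotients of $H$); this is a routine check using that $N=\varprojlim\pi_i(N)$ and that scheme-theoretic image commutes with the relevant limits for affine group schemes. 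Once these compatibilities are in place, the limit morphism $\f=\varprojlim\f_i\colon\Gamma\to G$ satisfies $\alpha\f=\beta$ by construction, giving the desired weak solution.
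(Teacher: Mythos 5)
There is a genuine gap at the step you yourself flag as the main obstacle: passing from the family of weak solutions $(\f_i)$ to a compatible one. The sets $S_i$ of weak solutions of $(\alpha_i,\beta_i)$ do form an inverse system of non-empty sets, but an inverse limit of non-empty sets over a directed index set can be empty without some compactness or surjectivity input, and neither of your proposed fixes supplies it. First, $S_i$ is \emph{not} a coset of $\Hom(\Gamma,\ker\alpha_i)$: the pointwise product of two homomorphisms into a non-abelian group is not a homomorphism, so the weak solutions of $(\alpha_i,\beta_i)$ do not form a torsor under $\Hom(\Gamma,\ker\alpha_i)$ in any useful sense. Second, $\Hom(\Gamma,G_i)$ need not be (the $K$-points of) an affine scheme --- already $\Hom(\Gm,\Gm)\cong\mathbb{Z}$ is a constant, non-affine scheme --- and even where a scheme structure exists, non-emptiness of a cofiltered limit of non-empty affine schemes only gives a non-empty \emph{scheme}, not a $K$-rational point, which is what an actual morphism $\Gamma\to G$ requires. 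So the limit argument as sketched does not close. (A correct self-contained route is a Zorn's lemma argument on partial lifts $\Gamma\to G/N'$ for closed normal $N'\leq\ker\alpha$, reducing the maximality contradiction to an embedding problem with algebraic kernel, which in turn must be dominated by a genuinely algebraic one via a fiber-product trick; each of these steps needs its own argument.)

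By contrast, the paper's proof sidesteps the entire reduction: it first upgrades the hypothesis to allow a non-surjective $\beta\colon\Gamma\to H$ (by replacing $H$ with $\beta(\Gamma)$ and $G$ with $\alpha^{-1}(\beta(\Gamma))$, which is again an algebraic embedding problem), and then invokes \cite[Prop.~4]{BassLubotzkyMagidMozes:TheProalgebraicCompletionOfRigidGroups} for the passage from algebraic to proalgebraic targets. Your decomposition of $G$ into algebraic quotients and the verification that $H=\varprojlim H_i$ are fine as far as they go, but the heart of the lemma is precisely the gluing step, and that is the part your proposal leaves unproved.
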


\begin{proof}
First consider a pair consisting of an epimorphism $\alpha\colon G\twoheadrightarrow H$ of algebraic groups and any morphism
$\beta\colon\Gamma\to H$ (not necessarily an epimorphism).
We claim that we can complete the commutative diagram
\begin{equation} \label{eqn: triangle for projective}
	\xymatrix{
		\Gamma \ar^\beta[rd] \ar@{..>}_\f[d]& \\
		G \ar@{->>}^-\alpha[r] & H	
	}
	\end{equation}
with a morphism $\f\colon\Gamma\to G$.  To see this, 
replace $H$ by $\beta(\Gamma)$ and $G$ by $\alpha^{-1}(\beta(\Gamma))$; 
this yields an algebraic embedding problem for $\Gamma$, which has a weak solution, by hypothesis.  We thus obtain the asserted map $\phi$, proving the claim.

By \cite[Prop.~4]{BassLubotzkyMagidMozes:TheProalgebraicCompletionOfRigidGroups}, the above claim implies that every diagram  (\ref{eqn: triangle for projective}) 
	with $\alpha\colon G\twoheadrightarrow H$ an epimorphism of proalgebraic groups can be completed with a morphism~$\f$, thereby showing that every embedding problem for $\Gamma$ has a weak solution.  (We note that \cite{BassLubotzkyMagidMozes:TheProalgebraicCompletionOfRigidGroups} has the standing hypothesis that the base field $K$ is algebraically closed and of characteristic zero; but the self-contained proof of \cite[Prop.~4]{BassLubotzkyMagidMozes:TheProalgebraicCompletionOfRigidGroups} does not make use of that assumption.)
	\end{proof}
 
The following lemma will allow us to reduce the solvability of admissible embedding problems to the solvability of algebraic embedding problems and split admissible embedding problems.

\begin{lemma} \label{lem: domination}
	Let $\Gamma$ be a proalgebraic group. If every algebraic embedding problem for $\Gamma$ has a weak solution, then every admissible embedding problem $G\twoheadrightarrow H,\ \Gamma\twoheadrightarrow H$ for $\Gamma$ is dominated by a split admissible embedding problem.  That is, there exists a split admissible embedding problem $G'\twoheadrightarrow H',\ \Gamma\twoheadrightarrow H'$ and epimorphisms $G'\twoheadrightarrow G$, $H'\twoheadrightarrow H$ such that 
	\begin{equation} \label{eqn:diagram dominate}
	\xymatrix{
		&  \Gamma \ar@{->>}[d] \ar@/^2pc/@{->>}[dd] \\
		G' \ar@{->>}[r] \ar@{->>}[d] & H' \ar@{->>}[d] \\
		G \ar@{->>}[r] & H 	
	}
	\end{equation}
	commutes.	
\end{lemma}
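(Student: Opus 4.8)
The plan is to take the given admissible embedding problem $\alpha\colon G\twoheadrightarrow H$, $\beta\colon\Gamma\twoheadrightarrow H$ with algebraic kernel $N=\ker(\alpha)$, and manufacture a split admissible problem sitting above it. The natural candidate for $H'$ is a fiber product that builds in a section: set $G'$ to be a suitable pullback of $G$ along a weak solution to an auxiliary algebraic embedding problem, and let $H'$ be the corresponding quotient. Concretely, I would first use the hypothesis (every algebraic embedding problem for $\Gamma$ has a weak solution) to obtain a morphism $\psi\colon\Gamma\to G$ with $\alpha\psi=\beta$; such a $\psi$ exists by Lemma~\ref{lemma: algebraic suffices for projective}, since $\alpha$ is an epimorphism with algebraic kernel but $G,H$ need not themselves be algebraic — one truncates to an algebraic layer, applies the hypothesis there, and passes to the limit. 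The image $\psi(\Gamma)$ may fail to be all of $G$, which is exactly why $\psi$ alone is only a weak solution and not a proper one; the job of the dominating problem is to repair this.

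The key construction is then: let $H' := G \times_{H} \Gamma$ (the fiber product along $\alpha$ and $\beta$), with its two projections $\pi_G\colon H'\to G$ and $\pi_\Gamma\colon H'\to\Gamma$. The weak solution $\psi$ gives a section-like map $\Gamma\to H'$, $\gamma\mapsto(\psi(\gamma),\gamma)$, and composed with $\beta'':=\pi_\Gamma$ we get a canonical epimorphism $\beta'\colon\Gamma\twoheadrightarrow H'$ (surjectivity because $\pi_\Gamma$ is split by this very map, or more carefully because the fiber product of epimorphisms is an epimorphism and $\Gamma\to H'$ lands correctly). Now I need a $G'$ with an epimorphism $G'\twoheadrightarrow H'$ that splits, algebraic kernel, and an epimorphism $G'\twoheadrightarrow G$ compatible with everything. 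The ker of $\pi_G\colon H'\to G$ is $\ker\beta$, which is generally not algebraic, so $H'\twoheadrightarrow H$ is admissible-with-large-kernel but $G'\to H'$ must have small kernel — so $G'$ should be built to have kernel over $H'$ equal to (a copy of) $N=\ker\alpha$. The right move is $G' := G\times_H H'$, i.e. the fiber product of $\alpha\colon G\to H$ with the composite $H'\to G\xrightarrow{\alpha} H$; equivalently $G' = G\times_H G\times_H\Gamma$. Then $\ker(G'\twoheadrightarrow H')\cong N$ (algebraic), the diagonal $G\to G\times_H G$ induces a section of $G'\twoheadrightarrow H'$ (this is where the splitting comes from — the fiber product with a diagonal always splits), and the projection $G'\twoheadrightarrow G$ onto the first factor together with $H'\twoheadrightarrow H$ makes diagram~\eqref{eqn:diagram dominate} commute.

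The remaining checks are bookkeeping: (i) all four maps in the square are epimorphisms — each is a base change of $\alpha$ or $\beta$, and epimorphisms of (pro)algebraic groups are stable under base change and composition, using the flat-descent characterization of epimorphism recalled in \S\ref{proalg gps}; (ii) the new problem is admissible — its kernel is $N$, which is algebraic, and we must verify $\rank(H')<\rank(\Gamma)$. This last point is the main obstacle: $H'$ is an honest proalgebraic (non-algebraic) group, built as $G\times_H\Gamma$, and bounding its rank requires knowing that $\rank$ of a fiber product of an algebraic-kernel epimorphism over $\Gamma$ stays below $\rank(\Gamma)$ — essentially that $\rank(G\times_H\Gamma)=\rank(\Gamma)$ when $\rank(H)<\rank(\Gamma)$, so one does not actually gain rank by this construction and hence cannot get admissibility "for free." I expect one needs the rank-arithmetic from \cite[Prop.~3.1, \S3]{Wibmer:FreeProalgebraicGroups}: since $K[H']$ is a quotient of $K[G]\otimes_{K[H]}K[\Gamma]$ and $K[\Gamma]$ has dimension $\rank(\Gamma)\ge|K|$ while the "new" part coming from $G$ over $H$ is governed by $\rank(H)<\rank(\Gamma)$, a cardinal-arithmetic estimate gives $\rank(H')\le\max(\rank(H),\rank(\Gamma))=\rank(\Gamma)$, and one must push this to a strict inequality, presumably by observing $H'$ surjects onto $\Gamma$ with algebraic-by-something kernel — wait, it does not surject onto $\Gamma$ unless... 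Actually $\pi_\Gamma\colon H'\to\Gamma$ \emph{is} an epimorphism (base change of $\alpha$), so $\rank(H')\ge\rank(\Gamma)$, and the reverse inequality from the tensor-product bound forces $\rank(H')=\rank(\Gamma)$ — which is \emph{not} $<\rank(\Gamma)$. So the naive fiber product $H'=G\times_H\Gamma$ is \emph{too big}. The fix: replace $\Gamma$ in the fiber product by a \emph{coalgebraic-by-small} approximation, i.e. choose a coalgebraic subgroup of $\Gamma$ of rank strictly less than $\rank(\Gamma)$ through which both $\beta$ and $\psi$ factor (possible since $H$ and $G$ — via the algebraic kernel — are controlled by an algebraic quotient, so $\beta$ and $\psi$ together factor through $\Gamma/M$ for a coalgebraic $M$ with $\rank(\Gamma/M)\le\max(\rank(H),\rank(N))<\rank(\Gamma)$ in char $0$), and form $H':=G\times_H(\Gamma/M)$ instead. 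Then $\rank(H')\le\max(\rank(H),\rank(\Gamma/M))<\rank(\Gamma)$, giving admissibility, while $\Gamma\twoheadrightarrow\Gamma/M\to H'$ remains an epimorphism and the whole square still commutes. Making this choice of $M$ precise — ensuring $\psi$ (a weak solution, whose image may be a proper but possibly non-algebraic subgroup of $G$) factors through a coalgebraic subgroup of small rank — is the technical heart, and I would handle it by noting $\alpha\psi=\beta$ factors through the algebraic $H$ and $\ker\alpha$ is algebraic, so $\psi$ factors through $\Gamma/M$ where $G/(\text{stuff})$ is algebraic, i.e. $\Gamma/M$ can be taken algebraic, so $\rank(\Gamma/M)=1<\rank(\Gamma)$ outright.
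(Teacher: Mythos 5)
Your opening move (obtain a weak solution $\psi\colon\Gamma\to G$ from the hypothesis via Lemma~\ref{lemma: algebraic suffices for projective}) matches the paper, but the fiber-product construction that follows has two genuine gaps. First, the map $\Gamma\to H':=G\times_H\Gamma$, $\gamma\mapsto(\psi(\gamma),\gamma)$, is \emph{not} an epimorphism: its image is the graph of $\psi$, which meets $\ker(\pi_\Gamma)\cong N$ trivially and is therefore a proper closed subgroup of $G\times_H\Gamma$ whenever $N\neq 1$ (indeed $G\times_H\Gamma$ is generated by the graph together with $N\times\{1\}$). The justification you give --- that this map splits $\pi_\Gamma$ --- shows only that $\pi_\Gamma$ is a split epimorphism, not that the section is one. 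Without an epimorphism $\Gamma\twoheadrightarrow H'$ you do not have an embedding problem for $\Gamma$ at all, so the construction fails before the rank question even arises. Second, the rank repair you propose rests on the claim that $H$ is algebraic, so that $\psi$ factors through an algebraic quotient $\Gamma/M$ of rank $1$. But admissibility only requires that $\ker\alpha$ be algebraic and that $\rank(H)<\rank(\Gamma)$; the group $H$ (hence $G$, hence $\psi(\Gamma)$) can be a genuinely non-algebraic proalgebraic group, for instance of countably infinite rank when $\rank(\Gamma)=|\C|$. So the final reduction to ``$\rank(\Gamma/M)=1$'' is not available.

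The paper's construction sidesteps both problems and is simpler: take $H':=\psi(\Gamma)\leq G$, the scheme-theoretic image of the weak solution. Then $\Gamma\twoheadrightarrow H'$ by definition of the image, and $\rank(H')\leq\rank(G)=\rank(H)<\rank(\Gamma)$ by \cite[Lemmas~3.5 and 3.6]{Wibmer:FreeProalgebraicGroups}. For $G'$ one takes not a fiber product but the semidirect product $G':=N\rtimes H'$, where $H'\leq G$ acts on the normal subgroup $N$ by conjugation; this is split over $H'$ with algebraic kernel $N$ by construction. The only nontrivial verification is that the multiplication map $G'\to G$, $(n,h')\mapsto nh'$, is an epimorphism, and this is exactly where the hypothesis that $\beta\colon\Gamma\twoheadrightarrow H$ is an epimorphism enters: given $g\in G(R)$ with image $h\in H(R)$, one lifts $h$ to $\gamma\in\Gamma(S)$ over a faithfully flat $S/R$ and writes $g=\bigl(g\psi(\gamma)^{-1}\bigr)\cdot\psi(\gamma)$ with $g\psi(\gamma)^{-1}\in N(S)$. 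If you want to salvage your approach, replacing the fiber product by this semidirect product over the image $\psi(\Gamma)$ is the missing idea.
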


\begin{proof} Let $G\twoheadrightarrow H,\ \Gamma\twoheadrightarrow H$ be an admissible embedding problem for $\Gamma$. In particular, $N:=\operatorname{ker}(G\twoheadrightarrow H)$ is algebraic. It follows from Lemma \ref{lemma: algebraic suffices for projective} that there exists a weak solution, i.e., a morphism $\f\colon\Gamma\to G$ such that
	$$\xymatrix{
	 \Gamma \ar@{->>}[rd] \ar_{\f}[d] & \\
		G \ar@{->>}[r]& H 
	}
	$$
	commutes. Let $H'=\f(\Gamma)\leq G$. The group $G$ (and therefore also $H'$) acts on the normal subgroup $N$ via conjugation and we can form the semidirect product $G'=N\rtimes H'$.
	So we have a split embedding problem $G'\twoheadrightarrow H'$ and $\Gamma\twoheadrightarrow H'$. 
	
	The rank of a closed subgroup of $G$ is at most the rank of $G$ (\cite[Lemma 3.5]{Wibmer:FreeProalgebraicGroups}), and the source and target of an epimorphism with algebraic kernel have the same rank (\cite[Lemma 3.6]{Wibmer:FreeProalgebraicGroups}. It follows that 
	$\rank(H')\leq \rank(G)=\rank(H)<\rank(\Gamma)$.	Moreover, $\ker(G'\twoheadrightarrow H')=N$ is algebraic. Thus the embedding problem  $G'\twoheadrightarrow H'$, $\Gamma\twoheadrightarrow H'$ is admissible.

	The restriction of $G\twoheadrightarrow H$ to $H'\to H$ is an epimorphism because $\Gamma\twoheadrightarrow H$ is an epimorphism.
	We will show that the multiplication map $G'\to G,\ (n,h')\mapsto nh'$ is also an epimorphism. 

	Let $R$ be a $K$-algebra and $g\in G(R)$. Let $h\in H(R)$ be the image of $g$ under $G\twoheadrightarrow H$. Since $\Gamma\twoheadrightarrow H$ is an epimorphism there exists a faithfully flat $R$-algebra $S$ and a $\gamma\in\Gamma(S)$ that maps onto $h\in H(R)\subseteq H(S)$. 
	Now $g\in G(R)\subseteq G(S)$ and $\f(\gamma)\in G(S)$ both map to $h\in H(S)$. Thus $n=g\f(\gamma)^{-1}\in N(S)$ and $(n,\f(\gamma))\in G'(S)$ maps onto $g\in G(S)$. Therefore $G'\to G$ is an epimorphism. By construction, diagram (\ref{eqn:diagram dominate}) commutes.
\end{proof}

\begin{prop} \label{prop: group theoretic reduction to algebraic and split}
	Let $\Gamma$ be a proalgebraic group. Then $\Gamma$ is saturated if and only if
\renewcommand{\theenumi}{(\roman{enumi})}
\renewcommand{\labelenumi}{(\roman{enumi})}  
	\begin{enumerate}
		\item\label{sat i}  every algebraic embedding problem for $\Gamma$ has a weak solution and
		\item\label{sat ii} every split admissible embedding problem for $\Gamma$ has a solution.
	\end{enumerate}
\end{prop}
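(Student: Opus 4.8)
The plan is to prove the two implications separately, with essentially all of the substantive work already packaged in Lemmas~\ref{lemma: algebraic suffices for projective} and~\ref{lem: domination}; what then remains is a short diagram chase together with the bookkeeping of a degenerate case. For the ``only if'' direction, suppose $\Gamma$ is saturated. Condition~\ref{sat ii} is immediate, since a split admissible embedding problem is in particular admissible and hence has a (proper, so also weak) solution. For~\ref{sat i} I would dispose of the case where $\Gamma$ is algebraic separately --- it is elementary, since a non-trivial algebraic group is not saturated (e.g.\ the admissible embedding problem $\Gamma\times\Gamma\twoheadrightarrow 1$, $\Gamma\twoheadrightarrow 1$ has no solution, there being no epimorphism $\Gamma\twoheadrightarrow\Gamma\times\Gamma$), while the trivial group satisfies~\ref{sat i} trivially. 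So assume $\Gamma$ is not algebraic; then $K[\Gamma]$ is infinite-dimensional over $K$, so $\rank(\Gamma)$ is infinite, and consequently every algebraic embedding problem $\alpha\colon G\twoheadrightarrow H$, $\beta\colon\Gamma\twoheadrightarrow H$ is automatically admissible: $\ker(\alpha)$ is a closed subgroup of the algebraic group $G$ and hence algebraic, while $\rank(H)\leq 1<\rank(\Gamma)$. By saturatedness it then has a proper solution, which in particular is a weak solution, giving~\ref{sat i}.

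For the ``if'' direction, assume~\ref{sat i} and~\ref{sat ii}, and let $\alpha\colon G\twoheadrightarrow H$, $\beta\colon\Gamma\twoheadrightarrow H$ be an admissible embedding problem for $\Gamma$. Using~\ref{sat i}, I would invoke Lemma~\ref{lem: domination} to dominate it by a split admissible embedding problem $\alpha'\colon G'\twoheadrightarrow H'$, $\beta'\colon\Gamma\twoheadrightarrow H'$ together with epimorphisms $\pi\colon G'\twoheadrightarrow G$ and $q\colon H'\twoheadrightarrow H$ for which diagram~(\ref{eqn:diagram dominate}) commutes; in particular $\alpha\pi=q\alpha'$ and $\beta=q\beta'$. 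By~\ref{sat ii} the split admissible problem has a solution $\psi\colon\Gamma\twoheadrightarrow G'$ with $\alpha'\psi=\beta'$, and I would then check that $\f:=\pi\circ\psi\colon\Gamma\to G$ is a proper solution of the original problem. It is an epimorphism because a composition of epimorphisms of proalgebraic groups is an epimorphism (dually, $\f^*=\psi^*\circ\pi^*$ is a composition of injective maps of coordinate rings), and $\alpha\f=\alpha\pi\psi=q\alpha'\psi=q\beta'=\beta$. Hence every admissible embedding problem for $\Gamma$ has a solution, i.e.\ $\Gamma$ is saturated.

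I do not expect a genuine obstacle here: once Lemmas~\ref{lemma: algebraic suffices for projective} and~\ref{lem: domination} are in place, this proposition is their formal consequence. The only points worth recording carefully are that a non-algebraic proalgebraic group has infinite rank (so that algebraic embedding problems are automatically admissible, which is precisely what lets~\ref{sat i} follow from saturatedness), that a composition of epimorphisms of affine group schemes is again an epimorphism, and the handling of the trivial/algebraic case in the ``only if'' direction.
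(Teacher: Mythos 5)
Your proposal is correct and follows essentially the same route as the paper: the ``if'' direction is exactly the paper's argument (dominate via Lemma~\ref{lem: domination} using~\ref{sat i}, solve the split admissible problem via~\ref{sat ii}, and compose with the epimorphism $G'\twoheadrightarrow G$), while your ``only if'' direction merely fills in details (the infinite rank of a non-algebraic $\Gamma$ making algebraic embedding problems automatically admissible, and the degenerate algebraic case) that the paper dismisses as clear.
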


\begin{proof}
	Clearly~\ref{sat i} and~\ref{sat ii} must be satisfied if $\Gamma$ is saturated.
	Let $G\twoheadrightarrow H,\ \Gamma\twoheadrightarrow H$ be an admissible embedding problem. By~\ref{sat i} and Lemma~\ref{lem: domination} there exists a split admissible embedding problem dominating it as in diagram (\ref{eqn:diagram dominate}). By~\ref{sat ii} this split admissible embedding problem has a solution $\Gamma\twoheadrightarrow G'$. Composing this solution with $G'\twoheadrightarrow G$ yields a solution to the original embedding problem.
\end{proof}

\section{Differential Galois theory} \label{sect: dgt}

In the first part of this section we  recall 
some basic results from differential Galois theory. We also introduce some notation that will be used throughout the text. In the second part, using Proposition \ref{prop: group theoretic reduction to algebraic and split} and \cite{BachmayrHarbaterHartmannWibmer:FreeDifferentialGaloisGroups}, we give a criterion for the freeness of the absolute differential Galois group in terms of differential embedding problems (Proposition~\ref{prop: criterion for abs differential group to be free}).

Throughout this paper all differential fields are assumed to be of characteristic zero.
The letter $F$ will always denote a differential field with derivation $\partial\colon F\to F$ and field of constants $K$. 
(The field of constants of a differential field $L$ is $L^\partial=\{a\in F|\ \partial(a)=0\}$.)  For now, we do not assume that $K$ is algebraically closed.

\subsection{Picard-Vessiot extensions and differential Galois groups} \label{subsec: PV dgg}

Classically, one considers Picard-Vessiot extensions $L/F$ associated to a given linear differential equation $\partial(y)=Ay$, where $A\in F^{n\times n}$. In this case $L$ is generated as a field by the entries of a fundamental solution matrix of $\partial(y)=Ay$. In particular, $L$ is a finitely generated field extension of $F$. (Standard references are \cite{SingerPut:differential} and \cite{Magid:LecturesOnDifferentialGaloisTheory}.) For our purposes it is essential to consider Picard-Vessiot extensions that are not finitely generated as field extensions.  Instead of considering a Picard-Vessiot extension of a single equation, we consider a Picard-Vessiot extension of a family of equations
\begin{equation} \label{eq: family} \partial(y)=A_iy,\ A_i\in F^{n_i\times n_i},\ i\in I.\end{equation}

Differential Galois theory in this generality is treated in \cite{AmanoMasuokaTakeuchi:HopfPVtheory} (the Hopf-algebraic definition there is equivalent to our definition of Picard-Vessiot extensions given 
below by \cite{Takeuchi:hopfalgebraicapproach}, Cor.~3.5.) 

\begin{defi}
	A differential field extension $L/F$ is a \emph{Picard-Vessiot extension} for (\ref{eq: family}) if
\renewcommand{\theenumi}{(\roman{enumi})}
\renewcommand{\labelenumi}{(\roman{enumi})}  
	\begin{enumerate}
		\item for every $i\in I$ there exists $Y_i\in\Gl_{n_i}(L)$ such that $\partial(Y_i)=A_iY_i$,
		\item $L$ is generated as a field extension of $F$ by the entries of all the $Y_i$'s,
		\item\label{PV iii} $L^\partial=F^\partial$.
	\end{enumerate}
	The differential $F$-subalgebra $R$ of $L$ generated by all the entries and the inverses of the determinants of all these $Y_i$'s is called a \emph{Picard-Vessiot ring} for (\ref{eq: family}). 	
\end{defi}

A Picard-Vessiot ring for (\ref{eq: family}) can also be characterized without reference to a Picard-Vessiot extension: It is a differential $F$-algebra $R$ such that
\begin{enumerate}
	\item for every $i\in I$ there exists $Y_i\in\Gl_{n_i}(R)$ such that $\partial(Y_i)=A_iY_i$,
	\item $R$ is generated as an $F$-algebra by all the entries of all the $Y_i$'s and the inverses of their determinants
	\item $R$ is $\partial$-simple, i.e., all $\partial$-ideals of $R$ are trivial and
	\item $R^\partial=F^\partial$. (If $K=F^\partial$ is algebraically closed, this condition follows automatically 
from the three other conditions.)
\end{enumerate}

A \emph{Picard-Vessiot extension} (resp. \emph{Picard-Vessiot ring}) is a Picard-Vessiot extension (resp. ring) with respect to some family of differential equations.

If $K=F^\partial$ is algebraically closed, there exists a Picard-Vessiot extension for any family of equations, and such a Picard-Vessiot extension or a Picard-Vessiot ring is unique up to an isomorphism of differential $F$-algebras. 
In that case, we can speak of ``the Picard-Vessiot extension'' 
associated to an equation, or to a family of equations. 
We note that \cite{AmanoMasuokaTakeuchi:HopfPVtheory} provides a characterization of Picard-Vessiot extensions and rings without reference to a family of differential equations.

\begin{defi} \label{def: dgg}
	The \emph{differential Galois group} $\gal(L/F)=\gal(R/F)$ of a Picard-Vessiot extension $L$ or a Picard-Vessiot ring $R$ 
 is the functor that associates to a $K$-algebra $T$ the group of differential automorphisms $\Aut^\del(R\otimes_K T/F\otimes_K T)$ of $R\otimes_K T$ over $F\otimes_K T$.
\end{defi}

Here $T$ is considered as a differential algebra with the trivial derivation. The differential Galois group is a proalgebraic group over $K$.
In fact $G=\gal(R/F)$ is represented by $K[G]=(R\otimes_F R)^\partial$ and the canonical map $R\otimes_KK[G]\to R\otimes_F R$ is an isomorphism. 
 In Section~\ref{sec: Solving differential embedding problems} this isomorphism will be interpreted geometrically by saying that $\spec(R)$ is a differential $G$-torsor.

\begin{rem} \label{rem: finite type}
	Let $L/F$ be a Picard-Vessiot extension with Picard-Vessiot ring $R$ and differential Galois group $G$. Then the following statements are equivalent: 
	\begin{enumerate}
		\item $L$ is a finitely generated field extension of $F$.
		\item $R$ is a finitely generated $F$-algebra.
		\item $G$ is algebraic (i.e., of finite type).
		\item $L/F$ is a Picard-Vessiot extension of a single equation.
	\end{enumerate}
\end{rem}

A Picard-Vessiot extension or ring satisfying the above equivalent conditions is said to be \emph{of finite type}.

Consider a Picard-Vessiot ring $R$ for a (possibly infinite) family of differential equations indexed over some set $I$. By viewing $I$ as the filtered direct limit of its finite subsets, we can realize $R$ as the filtered direct limit $\varinjlim_i R_i$ of Picard-Vessiot rings $R_i/F$ of finite type. Here each $R_i$ is in fact a Picard-Vessiot ring of a single equation, by the above remark.  If $R_i$ has differential Galois group $G_i$ and $T$ is a $K$-algebra, then an element $g\in G(T)=\Aut^\del(R\otimes_K T/F\otimes_K T)$ restricts to an element $g_i\in G_i(T)=\Aut^\del(R_i\otimes_K T/F\otimes_K T)$ for every $i$. Conversely, to define a $g\in \Aut^\del(R\otimes_K T/F\otimes_K T)$ is equivalent to defining a compatible system of $g_i$'s, i.e., $G(T)=\varprojlim_i G_i(T)$ and so $G=\varprojlim_i G_i$.

Let $L/F$ be a Picard-Vessiot extension with Picard-Vessiot ring $R$, $a\in L$, $T$ a $K$-algebra and $g\in G(T)$, where $G=\gal(R/F)$. The automorphism $g\colon R\otimes_K T\to R\otimes_K T$ extends to an automorphism $\widetilde{g}$ of the total ring of fractions of $R\otimes_K T$, which contains $L$. If $\widetilde{g}(a)=a$ we say that \emph{$a$ is fixed by $g$}. For a closed subgroup $H$ of $G$ we set 
$$L^H=\{a\in L|\ a \text{ is fixed by all } h\in H(T) \text{ for all $K$-algebras } T\}.$$
A similar notation will be used for Picard-Vessiot rings, i.e., 
$$R^H=\{a\in R|\ a \text{ is fixed by all } h\in H(T) \text{ for all $K$-algebras } T\}.$$

We can now recall the fundamental theorem of differential Galois theory, the {\em Galois correspondence}:
	Let $L/F$ be a Picard-Vessiot extension with differential Galois group $G$.
\renewcommand{\theenumi}{(\alph{enumi})}
\renewcommand{\labelenumi}{(\alph{enumi})}  	
	\begin{itemize}
		\item\label{diff corr a}  The assignments $M\mapsto\gal(L/M)$ and $H\mapsto L^H$ are mutually inverse bijections between the set all intermediate differential fields $F\subseteq M\subseteq L$ and the set of all closed subgroups $H$ of $G$.
		\item\label{diff corr b} For an intermediate differential field $M$, the extension $M/F$ is Picard-Vessiot if and only if the corresponding closed subgroup $H$ of $G$ is normal. Moreover, in this case, the restriction map $\gal(L/F)\to \gal(M/F)$ is an epimorphism with kernel $\gal(L/M)$, so that $\gal(M/F)\cong\gal(L/F)/\gal(L/M)$.
	\end{itemize}

If $R$ is the Picard-Vessiot ring of $L/F$, then in \ref{diff corr b} above, the Picard-Vessiot ring of $M/F$ is $R^H$.

\begin{defi}
	If $K=F^\partial$ is algebraically closed, we define the \emph{absolute differential Galois group of $F$} as the differential Galois group of the Picard-Vessiot extension for the family of all linear differential equations over $F$.
\end{defi}

\subsection{Differential embedding problems} 

In this section, we apply our result on embedding problems of proalgebraic groups (Proposition \ref{prop: group theoretic reduction to algebraic and split}) in the differential setting to obtain a new characterization of free absolute differential Galois groups in terms of differential embedding problems.

\begin{defi}
	A \emph{differential embedding problem} over $F$ is a pair $(\alpha:G\twoheadrightarrow H, R)$ where $R$ is a Picard-Vessiot ring over $F$ with $H\cong\gal(R/F)$,  and  $\alpha\colon G\twoheadrightarrow H$ is an epimorphism of proalgebraic groups. A \emph{(proper) solution} is an embedding of Picard-Vessiot rings $R\hookrightarrow S$ such that $G\cong\gal(S/F)$ and the diagram
	$$
	\xymatrix{
		\gal(S/F) \ar@{->>}^{\operatorname{res}}[r] & \gal(R/F) \\
		G \ar[u] \ar@{->>}[r] & H	\ar[u]
	}
	$$
	commutes. A differential embedding problem is \emph{split} if there exists a morphism $\alpha'\colon H\to G$ such that $\alpha\alpha'$ is the identity. In this case $G\cong N\rtimes H$ with $N=\ker(\alpha)$, and we may also refer to the pair $(N\rtimes H, R)$ as a differential embedding problem.
	
	A differential embedding problem is \emph{of finite type} if $G$ is an algebraic group, i.e., an affine group scheme of finite type. (This implies that $R/F$ is of finite type.)
\end{defi}

If $K=F^\partial$ is algebraically closed, one can use the differential Galois correspondence for the Picard-Vessiot extension of the family of all linear differential equations over $F$ to translate embedding problems for the absolute differential Galois group of $F$ into differential embedding problems over $F$ and vice versa (cf. \cite[Section~3.3]{BachmayrHarbaterHartmannWibmer:FreeDifferentialGaloisGroups}).

\begin{defi}
	The {\em rank} of a Picard-Vessiot extension $L/F$, denoted by $\rank(L)$, is the smallest cardinal number $\kappa$ such that $L$ is a Picard-Vessiot extension for a family of cardinality $\kappa$.
\end{defi}
In particular, $\rank(L)\leq |F|$ for every Picard-Vessiot extension $L/F$. If $L/F$ is a Picard-Vessiot extension with Picard-Vessiot ring $R$ we also set $\rank(R):=\rank(L)$.
Lemma 3.2 of \cite{BachmayrHarbaterHartmannWibmer:FreeDifferentialGaloisGroups} provides several equivalent characterizations of the rank of a Picard-Vessiot extension. In particular, if $R$ is a Picard-Vessiot ring that is not of finite type then the rank of $R$ agrees with its dimension as an $F$-vector space. Moreover, if $L/F$ is a Picard-Vessiot extension with differential Galois group $G$, then $\rank(L)=\rank(G)$ (\cite[Lemma~3.3]{BachmayrHarbaterHartmannWibmer:FreeDifferentialGaloisGroups}).

\begin{defi}
	A differential embedding problem $(\alpha\colon G\twoheadrightarrow H,\ R)$ over $F$ is \emph{admissible} if $\rank(R)<|F|$ and $\ker(\alpha)$ is an algebraic group.
\end{defi} 
Admissible differential embedding problems correspond to admissible embedding problems of proalgebraic groups as defined in Section~\ref{ebp}.
Note that every embedding problem of finite type (i.e. with $G$ algebraic) is admissible.

The criterion given by the following proposition will be a crucial ingredient to proving Matzat's conjecture.

\begin{prop} \label{prop: criterion for abs differential group to be free}
	Assume that $K=F^\partial$ is algebraically closed. Then the absolute differential Galois group of $F$ is the free proalgebraic group on a set of cardinality $|F|$ if and only if
\renewcommand{\theenumi}{(\roman{enumi})}
\renewcommand{\labelenumi}{(\roman{enumi})}	
	\begin{enumerate}
		\item\label{free i} every differential embedding problem over $F$ of finite type has a solution and
		\item\label{free ii} every split admissible differential embedding problem over $F$ has a solution.
	\end{enumerate}
\end{prop}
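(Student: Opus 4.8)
The plan is to reduce the freeness of the absolute differential Galois group $\Gal := \gal(F)$ to the group-theoretic criterion of Proposition~\ref{prop: group theoretic reduction to algebraic and split}, together with the translation between differential embedding problems over $F$ and embedding problems of proalgebraic groups (valid since $K=F^\partial$ is algebraically closed), and the cardinality computations relating $\rank(\Gal)$, $\rank(L)$ and $|F|$. First I would recall that, by \cite[Cor. 3.12]{Wibmer:FreeProalgebraicGroups}, a free proalgebraic group on a set of cardinality $|F|$ has rank $|F|$ (here we use $\operatorname{char}(K)=0$ and $|F|\geq|K|$, which holds for the rational function fields of interest; more carefully, one notes that the absolute differential Galois group always satisfies $\rank(\Gal)\le|F|$, and the inverse problem guarantees enough quotients to force $\rank(\Gal)=|F|$ once embedding problems of finite type are solvable). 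By \cite[Theorem 3.42]{Wibmer:FreeProalgebraicGroups}, over a field of characteristic zero a proalgebraic group of rank $\ge|K|$ is free on a set of cardinality equal to its rank if and only if it is saturated. So the whole statement collapses to: \emph{$\Gal$ is saturated $\iff$ \ref{free i} and \ref{free ii} hold}, plus the remark that saturatedness forces $\rank(\Gal)=|F|$.

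For the forward direction, suppose $\Gal$ is free on a set of cardinality $|F|$. Then it is saturated, so every admissible embedding problem for $\Gal$ has a solution; in particular \ref{free ii} holds after translating split admissible differential embedding problems into split admissible embedding problems of proalgebraic groups (admissibility matches because $\rank(R)<|F|=\rank(\Gal)$ corresponds to $\rank(H)<\rank(\Gal)$ via $\rank(L)=\rank(G)$, \cite[Lemma~3.3]{BachmayrHarbaterHartmannWibmer:FreeDifferentialGaloisGroups}). For \ref{free i}, a differential embedding problem of finite type has $G$ algebraic hence $\rank(H)\le 1<|F|=\rank(\Gal)$, so it is admissible and gets a proper solution, which in particular is a weak solution — but I must check that a group-theoretic proper solution $\phi\colon\Gal\twoheadrightarrow G$ actually yields a differential solution, i.e. a Picard-Vessiot ring $S$ with $\gal(S/F)\cong G$ embedding $R$ compatibly. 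This is exactly the content of the Galois correspondence for the Picard-Vessiot ring of \emph{all} equations over $F$: an epimorphism $\Gal\twoheadrightarrow G$ with algebraic $G$ corresponds to a coalgebraic normal subgroup, hence to an intermediate Picard-Vessiot extension of finite type, and the compatibility with $\alpha$ translates back. So \ref{free i} follows.

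For the converse, assume \ref{free i} and \ref{free ii}. The goal is to verify conditions \ref{sat i} and \ref{sat ii} of Proposition~\ref{prop: group theoretic reduction to algebraic and split} for $\Gamma=\Gal$. Condition \ref{sat ii} (every split admissible embedding problem for $\Gal$ has a solution) is immediate by translating back to differential embedding problems via the Galois correspondence and applying \ref{free ii}. Condition \ref{sat i} requires that every \emph{algebraic} embedding problem $\alpha\colon G\twoheadrightarrow H$, $\beta\colon\Gal\twoheadrightarrow H$ with $G$ algebraic has a \emph{weak} solution; here one first replaces $H$ by $\beta(\Gal)$ and $G$ by $\alpha^{-1}(\beta(\Gal))$ (as in the proof of Lemma~\ref{lemma: algebraic suffices for projective}) to reduce to the case where $\beta$ is an epimorphism, i.e. to a differential embedding problem of finite type over $F$, which has a (proper, hence weak) solution by \ref{free i}. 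Then Proposition~\ref{prop: group theoretic reduction to algebraic and split} gives that $\Gal$ is saturated, and \cite[Theorem 3.42]{Wibmer:FreeProalgebraicGroups} upgrades this to freeness once we know $\rank(\Gal)=|F|$; that rank equality follows because, on the one hand, $\rank(\Gal)\le|F|$ always, and on the other hand solvability of all embedding problems of finite type produces Picard-Vessiot quotients of arbitrarily large rank up to $|F|$ (e.g. directly, as in \cite{BachmayrHarbaterHartmannWibmer:FreeDifferentialGaloisGroups}), so $\rank(\Gal)=|F|\ge|K|$.

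The main obstacle is the bookkeeping in the dictionary between differential and group-theoretic embedding problems: one must be careful that "proper solution" on the differential side (an embedding of Picard-Vessiot rings $R\hookrightarrow S$ with $\gal(S/F)\cong G$ compatibly) corresponds precisely to an epimorphism $\phi\colon\Gal\twoheadrightarrow G$ with $\alpha\phi=\beta$, and that admissibility, rank, and the finite-type condition all match up under this correspondence. Once the translation is set up cleanly (invoking \cite[Section~3.3]{BachmayrHarbaterHartmannWibmer:FreeDifferentialGaloisGroups} for the correspondence and \cite[Lemmas 3.2, 3.3]{BachmayrHarbaterHartmannWibmer:FreeDifferentialGaloisGroups} for the rank identities), the proof is a formal consequence of Proposition~\ref{prop: group theoretic reduction to algebraic and split} and \cite[Theorem 3.42]{Wibmer:FreeProalgebraicGroups}.
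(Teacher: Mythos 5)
Your overall strategy---translating between differential embedding problems over $F$ and embedding problems for $\Gamma=\gal(\widetilde F/F)$, verifying the two conditions of Proposition~\ref{prop: group theoretic reduction to algebraic and split} to conclude that $\Gamma$ is saturated, and then invoking \cite[Theorem 3.42]{Wibmer:FreeProalgebraicGroups}---is exactly the paper's, and both the forward direction and the saturation step of the converse are in order. The gap is in your justification of $\rank(\Gamma)=|F|$, which is needed before \cite[Theorem 3.42]{Wibmer:FreeProalgebraicGroups} can be applied. You assert that solvability of finite-type embedding problems ``produces Picard-Vessiot quotients of arbitrarily large rank up to $|F|$.'' It does not: every quotient obtained from a differential embedding problem of finite type is algebraic, hence of rank at most $1$, and iterating finite-type embedding problems only yields countable chains of finite-type extensions whose union has rank at most $\aleph_0$. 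To push the rank beyond $\aleph_0$ one must solve an embedding problem over an infinitely generated Picard-Vessiot extension, which condition~\ref{free i} does not provide. Since $|F|$ is uncountable in the case of interest ($F=\C(x)$), condition~\ref{free i} alone cannot force $\rank(\Gamma)=|F|$, and the appeal to ``the inverse problem guarantees enough quotients'' is not a proof.

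The paper closes this gap by using condition~\ref{free ii}: fix a non-trivial algebraic group $G$ over $K$, let $L\subseteq\widetilde F$ be the compositum of all Picard-Vessiot extensions of $F$ with differential Galois group isomorphic to $G$, and let $R$ be its Picard-Vessiot ring. If $\rank(L)<|F|$, then the projection $G\times\gal(R/F)\twoheadrightarrow\gal(R/F)$ together with $R$ is a \emph{split admissible} differential embedding problem; a solution yields, via the Galois correspondence, a Picard-Vessiot extension with group $G$ not contained in $L$, contradicting the definition of $L$. Hence $\rank(L)=|F|$, so $\rank(\Gamma)=|F|\geq|K|$. You should replace your rank argument with this (or an equivalent use of condition~\ref{free ii}); with that substitution the rest of your proof goes through and coincides with the paper's.
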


\begin{proof}
Let $\Gamma$ denote the absolute differential Galois group of $F$. It was shown in \cite[Theorem~3.7]{BachmayrHarbaterHartmannWibmer:FreeDifferentialGaloisGroups} that $\Gamma$ is the free proalgebraic group on a set of cardinality $|F|$ if and only if every admissible differential embedding problem over $F$ is solvable. Thus, if $\Gamma$ is the free proalgebraic group on a set of cardinality $|F|$, clearly~\ref{free i} and~\ref{free ii} must be satisfied.

To establish the converse, we first show that $\rank(\Gamma)=|F|$. Let $\widetilde{F}$ denote the Picard-Vessiot extension for the family of all linear differential equations over $F$, so that $\Gamma=\gal(\widetilde{F}/F)$. Fix a non-trivial algebraic group $G$ over $K$ and let $L$ denote the compositum of all Picard-Vessiot extensions of $F$ with differential Galois group isomorphic to $G$ (inside $\widetilde{F}$). If $\rank(L)=|F|$, then also $\rank(\Gamma)=|F|$ as claimed (using \cite[Lemma~3.3]{BachmayrHarbaterHartmannWibmer:FreeDifferentialGaloisGroups}). Assume for contradiction that $\rank(L)<|F|$. Let $R$ denote the Picard-Vessiot ring of $L$ and let $\alpha\colon G\times \gal(R/F)\to \gal(R/F)$ denote the projection onto the second factor. Then $(\alpha, R)$ is a split admissible differential embedding problem over~$F$. By~\ref{free ii} it has a solution, i.e., there exists a Picard-Vessiot ring $S$ over $F$ containing $R$ with $\gal(S/F)\cong G\times \gal(R/F)$ such that 
	$$
\xymatrix{
 G\times \gal(R/F)	 \ar_\simeq[d] \ar@{->>}[rd] &	 \\
	\gal(S/F) \ar@{->>}[r] & \gal(R/F) 
}
$$
commutes. Since $K$ is algebraically closed, any Picard-Vessiot ring over $F$ embeds into $\widetilde{F}$, so we may assume that $S$ is contained in $\widetilde{F}$. The Picard-Vessiot extension $L'/F$ corresponding to the closed normal subgroup $\gal(R/F)$ of $G\times\gal(R/F)\simeq \gal(S/R)$ has differential Galois group isomorphic to $G$ but is not contained in $L$ by the Galois correspondence (as $L$ and $L'$ correspond to the subgroups $G$ and $\gal(R/F)$ of $G\times \gal(R/F)$). 
This contradicts the choice of $L$. Thus $\rank(\Gamma)=|F|\geq |K|$.

On the other hand, we can use Proposition \ref{prop: group theoretic reduction to algebraic and split} to deduce from~\ref{free i} and~\ref{free ii} that $\Gamma$ is saturated. But a proalgebraic group $\Gamma$ over $K$ with $\rank(\Gamma)\geq|K|$ is saturated if and only if it is free on a set of cardinality $\rank(\Gamma)$ (\cite[Theorem 3.42]{Wibmer:FreeProalgebraicGroups}). Thus $\Gamma$ is free on a set of cardinality~$|F|$.
\end{proof}

\section{Solving differential embedding problems via patching} 
\label{sec: Solving differential embedding problems}
Proposition \ref{prop: criterion for abs differential group to be free} leaves us with the task to solve all differential embedding problems of type~\ref{free i} and~\ref{free ii} as in that proposition. All differential embedding problems over $\C(x)$ of type~\ref{free i} were already solved in \cite[Cor. 4.6]{BachmayrHartmannHarbaterPopLarge}, so it remains to consider split admissible differential embedding problems.

The main goal of this section is to establish a certain patching setup that yields solutions to split admissible differential embedding problems (Theorem \ref{theo: main patching}). This theorem generalizes \cite[Theorem~2.14]{BachmayrHarbaterHartmannWibmer:DifferentialEmbeddingProblems} from Picard-Vessiot rings of finite type to arbitrary Picard-Vessiot rings. The main strategy will be to write a proalgebraic group as the projective limit of algebraic groups and provide compatible solutions of the corresponding differential embedding problems of finite type. 

As before, $F$ always denotes a differential field of characteristic zero with field of constants $K$.

\subsection{Reduction to the finite type case}

It was noted in Section~\ref{subsec: PV dgg} that every Picard-Vessiot ring can be viewed as a direct limit of Picard-Vessiot rings of finite type, and that the differential Galois group is the projective limit of the corresponding algebraic differential Galois groups. The next lemma shows the converse, for arbitrary directed systems of Picard-Vessiot rings.

\begin{lem} \label{lem: direct system of PV rings}
	Let $(R_i)_{i\in I}$ be a directed system of Picard-Vessiot rings over $F$. Then $R=\varinjlim R_i$ is a Picard-Vessiot ring over $F$ and $\gal(R/F)=\varprojlim\gal(R_i/F)$.
\end{lem}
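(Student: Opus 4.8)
The strategy is to verify directly that $R=\varinjlim R_i$ satisfies the four conditions characterizing a Picard-Vessiot ring (the version stated in Section~\ref{subsec: PV dgg} without reference to a Picard-Vessiot extension), and then to identify the differential Galois group. First I would fix for each $i$ a family of linear differential equations $\partial(y)=A_{i,j}y$, $j\in J_i$, for which $R_i$ is a Picard-Vessiot ring, together with the corresponding fundamental solution matrices $Y_{i,j}\in\Gl_{n_{i,j}}(R_i)$. The images of these matrices in $R$ give a family of fundamental solution matrices over the whole index set $\coprod_i J_i$. Since direct limits commute with localization and with taking the algebra generated by a set of elements, $R$ is generated as an $F$-algebra by the entries of all these $Y_{i,j}$'s and the inverses of their determinants; this handles conditions (i) and (ii).

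The two remaining conditions are $\partial$-simplicity and $R^\partial=F^\partial$. For $\partial$-simplicity, a nonzero $\partial$-ideal $\mathfrak a$ of $R$ would meet some $R_i$ in a nonzero $\partial$-ideal (because every element of $R$ comes from some $R_i$, and $\mathfrak a\cap R_i$ is a $\partial$-ideal of $R_i$; one has to check it is nonzero, which follows by pulling back a nonzero element of $\mathfrak a$), contradicting $\partial$-simplicity of $R_i$. For the constants, any $a\in R^\partial$ lies in some $R_i$, hence in $R_i^\partial=F^\partial$; so $R^\partial=F^\partial$. Thus $R$ is a Picard-Vessiot ring over $F$. (If one prefers, one can alternatively build a Picard-Vessiot \emph{field} $L=\Frac$-type construction and check $L^\partial=F^\partial$, but working with rings is cleaner here since direct limits of rings are transparent.)

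For the Galois group, the key identity is that $K[\gal(R/F)]=(R\otimes_F R)^\partial$. Since tensor products and the functor $(-)^\partial$ both commute with filtered direct limits (the latter because $\partial$ is $F$-linear and a filtered colimit of exact sequences is exact), we get $(R\otimes_F R)^\partial=\varinjlim (R_i\otimes_F R_i)^\partial=\varinjlim K[\gal(R_i/F)]$. Taking $\spec$ turns this filtered colimit of Hopf algebras into the projective limit $\varprojlim\gal(R_i/F)$ of affine group schemes, so $\gal(R/F)=\varprojlim\gal(R_i/F)$. Alternatively, and perhaps more transparently, one argues on the functor of points exactly as in the paragraph of Section~\ref{subsec: PV dgg} recalling the finite-type reduction: for a $K$-algebra $T$, a differential automorphism of $R\otimes_K T$ over $F\otimes_K T$ is the same as a compatible system of differential automorphisms of the $R_i\otimes_K T$, i.e.\ $\gal(R/F)(T)=\varprojlim\gal(R_i/F)(T)$, and this is functorial in $T$.

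The main obstacle is a bookkeeping issue rather than a conceptual one: the transition maps $R_i\to R_{i'}$ in a general directed system of Picard-Vessiot rings need not be compatible with any \emph{a priori} fixed choice of defining families of equations, so one must be a little careful that the union of the chosen families of fundamental solution matrices really does generate $R$ and that $\partial$-simplicity passes to the colimit. The subtlety in the $\partial$-simplicity step — namely that $\mathfrak a\cap R_i\neq 0$ for \emph{some} $i$ whenever $\mathfrak a\neq 0$ — is where one uses that $I$ is directed, so that $R=\bigcup_i \operatorname{im}(R_i\to R)$. Everything else is a routine commutation of colimits with the relevant constructions.
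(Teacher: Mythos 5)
Your proposal is correct and follows essentially the same route as the paper: identify each $R_i$ with a subring of $R$ via the injectivity coming from $\partial$-simplicity, observe that the defining data (fundamental matrices, generators, simplicity, constants) pass to the directed union, and compute the Galois group on the functor of points as a compatible system of automorphisms of the $R_i\otimes_K T$. The only cosmetic difference is that you verify the intrinsic ring-theoretic characterization ($\partial$-simplicity and $R^\partial=F^\partial$) where the paper checks no new constants in $\Frac(R)=\bigcup\Frac(R_i)$; these are the two equivalent characterizations recorded in Section~\ref{subsec: PV dgg}.
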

\begin{proof}
	Since Picard-Vessiot rings are differentially simple, any morphism of Picard-Vessiot rings is injective. We can thus identify each $R_i$ with a subring of $R$. Then $R$ is the union of the $R_i$'s and similarly the field of fractions of $R$ is the union of the fields of fractions of the $R_i$'s. As there are no new constants in the latter differential fields, there are also no new constants in the fraction field of $R$. Thus, if $R_i$ is a Picard-Vessiot ring for a familiy $\mathcal{F}_i$ of linear differential equations over $F$, then $R$ is a Picard-Vessiot ring for $\cup \mathcal{F}_i$.
	
	Let $T$ be an algebra over $K=F^\del$. For a given $g\in \gal(R/F)(T)=\Aut^\del(R\otimes_K T/F\otimes_K T)$, the restriction maps $\gal(R/F)\to \gal(R_i/F)$ define an element $(g_i)$ of $(\varprojlim \gal(R_i/F))(T)=\varprojlim \gal(R_i/F)(T)$. Conversely, given an element $(g_i)$ of $\varprojlim \gal(R_i/F)(T)$, we can define an element $g$ of $\gal(R/F)(T)$ by setting $g(a)=g_i(a)$ for $a\in R_i\otimes_K T\subseteq R\otimes_K T$.
\end{proof}

Recall that a coalgebraic subgroup of a proalgebraic group $G$ is a normal closed subgroup $U\trianglelefteq G$ such that $G/U$ is algebraic, i.e., of finite type. 
The intersection of two coalgebraic subgroups $U,V$ of $G$ is also coalgebraic, since $G/(U\cap V)$ embeds into to the algebraic group $G/U\times G/V$.
Thus the set of coalgebraic subgroups of $G$ is a directed set, by defining $U\preccurlyeq V$ if and only if $U\supseteq V$. If $\mathcal{U}$ is a directed family of coalgebraic subgroups of $G$ such that $\bigcap_{U\in\mathcal{U}} U=1$, then $G=\varprojlim_\mathcal{U} G/U$.

If moreover $R$ is a Picard-Vessiot ring with differential Galois group $G$, then, by the Galois correspondence, $R=\varinjlim_\mathcal{U} R^U$, and $R^U$ is a Picard-Vessiot ring of finite type with differential Galois group $G/U$.

The following lemma will allow us to break down a split admissible differential embedding problem $(N\rtimes H, R)$ to a family of split differential embedding problems $(N\rtimes (H/U), R^U)$ of finite type. 

\begin{lemma} \label{lemma: safe}
	Let $H$ be a proalgebraic group acting (from the left) on an algebraic group $N$. Then there exists a directed family $\mathcal{U}$ of coalgebraic subgroups of $H$ with $\bigcap_{U\in\mathcal{U}} U=1$ such that each $U\in \mathcal{U}$ acts trivially on $N$. For the induced actions $H/U\times N\to N$ we then have a commutative diagram
	$$
	\xymatrix{
		H\times N \ar[d] \ar[rd] & \\
		H/V\times N \ar[r] \ar[d] & N \\
		H/U\times N 	\ar[ru]
	}
	$$ 
	for all $U\supseteq V$ in $\mathcal{U}$.
\end{lemma}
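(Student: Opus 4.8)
The statement is essentially a finiteness/continuity property of the action $H \times N \to N$ combined with the fact that the coalgebraic subgroups of $H$ form a directed system with trivial intersection. The plan is as follows. First, recall that $N$ is algebraic, so $K[N]$ is a finitely generated $K$-algebra; fix generators $f_1,\dots,f_m$ of $K[N]$. The action $H \times N \to N$ is dual to a coaction $\rho\colon K[N] \to K[H] \otimes_K K[N]$, and the statement that a coalgebraic subgroup $U$ acts trivially on $N$ is equivalent to saying that, after composing $\rho$ with $K[H] \twoheadrightarrow K[H/U]^{\perp\perp}$... more precisely, it is equivalent to the action factoring through $H/U$ and then $H/U$ acting trivially, i.e. the composite $K[N] \xrightarrow{\rho} K[H]\otimes_K K[N] \to K[H]\otimes_K K[N]$ lands in $\varepsilon_{K[H]}$-image. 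The cleaner way: $U$ acts trivially on $N$ iff for every $K$-algebra $T$ and every $u \in U(T)$, $u$ acts as the identity on $N_T$; dualizing, this says $\rho(f) - 1 \otimes f$ maps to $0$ under $K[H] \otimes K[N] \to K[U] \otimes K[N]$ for all $f$, i.e. the finitely many elements obtained from $\rho(f_i) - 1\otimes f_i$ lie in $I(U) \otimes K[N]$ where $I(U) = \ker(K[H]\to K[U])$.

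Second, the key point: since $K[H] = \varinjlim_{\mathcal U_0} K[H/U]$ where $\mathcal U_0$ ranges over \emph{all} coalgebraic subgroups of $H$ (a directed system with $\bigcap U = 1$, as recalled just before the lemma), and since each $\rho(f_i) \in K[H] \otimes_K K[N]$ involves only finitely many elements of $K[H]$, there is a single coalgebraic subgroup $U_0$ such that every $\rho(f_i)$ already lies in $K[H/U_0] \otimes_K K[N]$. Concretely, write $\rho(f_i) = \sum_j a_{ij} \otimes g_{ij}$ with $a_{ij} \in K[H]$; each $a_{ij}$ lies in $K[H/U_{ij}]$ for some coalgebraic $U_{ij}$; let $U_0$ be the intersection of the finitely many $U_{ij}$, which is again coalgebraic. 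This means the action $H \times N \to N$ factors as $H \to H/U_0 \xrightarrow{\bar\rho} \ (\text{action on } N)$. Now define $\mathcal U := \{ U : U \text{ coalgebraic in } H,\ U \subseteq U_0 \}$. This is cofinal in the directed system of \emph{all} coalgebraic subgroups of $H$, hence is itself directed and satisfies $\bigcap_{U \in \mathcal U} U = 1$ (the intersection over a cofinal subsystem of a system with trivial intersection is still trivial). For each $U \in \mathcal U$, since $U \subseteq U_0$ and $U_0$ acts trivially, $U$ acts trivially on $N$, and the action descends to $H/U \times N \to N$ compatibly. I expect this cofinality step and the reduction to finitely many elements to be the routine but load-bearing part of the argument.

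Third, the commutativity of the displayed diagram: for $U \supseteq V$ in $\mathcal U$ we have the canonical epimorphism $H/V \twoheadrightarrow H/U$, and all three actions ($H \times N \to N$, $H/V \times N \to N$, $H/U \times N \to N$) are induced from the single action that already factors through $H/U_0$ (hence through $H/U$ and through $H/V$). Thus both triangles and the outer triangle commute by construction: this is immediate from the fact that every arrow $H \to H/V \to H/U$ composes correctly and all the $N$-valued arrows are defined as the induced map from the factorization through $H/U_0$. In diagram form this is just the statement that the action map factors through the terminal-enough quotient, so I would phrase it simply as: since $U, V \subseteq U_0$, the actions of $H/U$ and $H/V$ on $N$ are the ones induced from $H/U_0$ via $H/V \twoheadrightarrow H/U \twoheadrightarrow H/U_0$ (if $V \subseteq U$, then... wait, need $U \subseteq U_0$ and $V \subseteq U_0$ with $V \subseteq U$ since $U \preccurlyeq V$ means $U \supseteq V$), and the maps $H/V \to H/U$, $H/V \to H/U_0$, $H/U \to H/U_0$ are the evident projections; compatibility of the induced actions is then automatic.

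\textbf{Main obstacle.} The only genuine subtlety I anticipate is making precise and correct the claim ``a coalgebraic subgroup $U$ acts trivially on $N$ iff $\rho(K[N]) \subseteq K[H/U] \otimes_K K[N]$'' — that is, being careful that factoring the \emph{morphism of schemes} $H \times N \to N$ through $H/U \times N$ is equivalent to $U$ acting trivially, rather than merely the action being insensitive to $U$ in some weaker sense. This is a standard fact (an action of $H$ on $N$ factors through a quotient $H/U$ precisely when $U$ is contained in the kernel of the action, and for a \emph{faithful} situation one would need more, but here we only need the factorization, not faithfulness on $H/U_0$), and it follows from the equivalence in Section~\ref{proalg gps} between epimorphisms and injectivity of the dual map together with the universal property of the quotient $H/U$. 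Once that dictionary is in place, the rest is the cofinality bookkeeping described above, and there are no hard estimates.
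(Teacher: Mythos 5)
Your proposal is correct and follows essentially the same route as the paper: both use the finite generation of $K[N]$ to locate a single coalgebraic subgroup $U_0$ with $\rho(K[N])\subseteq K[H/U_0]\otimes_K K[N]$ (the paper enlarges a finitely generated subalgebra of $K[H]$ to a finitely generated Hopf subalgebra via Waterhouse, which is the same local-finiteness fact underlying your presentation of $K[H]$ as the directed union of the $K[H/U]$), and then take $\mathcal{U}$ to be all coalgebraic subgroups contained in $U_0$, with cofinality giving trivial intersection and the diagram commuting automatically. The ``main obstacle'' you flag (triviality of the $U_0$-action given the factorization of the coaction) is handled in the paper exactly as you suggest, as a standard consequence of the dictionary between actions and coactions.
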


\begin{proof}
	Let $\rho\colon K[N]\to K[H]\otimes_K K[N]$ denote the co-action associated to the action $H \times N \to N$ of $H$ on $N$. Since $K[N]$ is a finitely generated $K$-algebra, there exists a finitely generated $K$-subalgebra $T$ of $K[H]$ such that $\rho(K[N])\subseteq T\otimes_K K[N]$. This $T$ is contained in a finitely generated Hopf-subalgebra of $K[H]$ (\cite[Theorem 3.3]{Waterhouse:IntroductiontoAffineGroupSchemes}) and this Hopf-subalgebra is of the form $K[H/U_0]$ for some coalgebraic subgroup $U_0$ of $H$. So the co-action $\rho$ factors through $K[H/U_0]\otimes_K K[N]\subseteq K[H]\otimes_K K[N]$.  The action of $H$ on $N$ thus factors through $H/U_0$, and the action of $U_0 \subseteq H$ on $N$ is trivial.  These properties also hold for any coalgebraic subgroup $U$ of $H$ contained in $U_0$. Consider the family $\mathcal{U}$ of all such $U$'s. The intersection of all coalgebraic subgroups of $H$ is trivial since $H$ is a proalgebraic group. Moreover, for every coalgebraic subgroup $V$ of $H$, the intersection $U_0\cap V$ is also a coalgebraic subgroup of~$H$. Therefore, the intersection of all $U$'s is trivial and the family $\mathcal{U}$ is as asserted.
\end{proof}

\begin{lemma} \label{lemma: reduce split to finite type}
	Let $(N\rtimes H, R)$ be a split differential embedding problem with $N$ algebraic and let $\mathcal{U}$ be a family of coalgebraic subgroups of $H$ as in Lemma \ref{lemma: safe}. Assume that for every $U\in \mathcal{U}$ a solution $S_U$ of the differential embedding problem $(N\rtimes (H/U), R^U)$ is given, together with morphisms $f_{UV}\colon S_U\to S_V$ for $U\supseteq V$ subject to the following conditions:
\renewcommand{\theenumi}{(\roman{enumi})}
\renewcommand{\labelenumi}{(\roman{enumi})}	
	\begin{enumerate}
		\item\label{family i}  $f_{VW}\circ f_{UV}=f_{UW}$ and $f_{UU}=\id$ for all $U\supseteq V\supseteq W$,
		\item\label{family ii} $$\xymatrix{ R^U \ar@{^(->}[r] \ar@{^(->}[d] & S_U \ar@{^(->}[d]\\
			R^V \ar@{^(->}[r] & S_V
		}$$
		commutes for all $U\supseteq V$ and
		\item\label{family iii} $$\xymatrix{
			\gal(S_V/F)  \ar[r]\ar@{->>}[d] & N\rtimes (H/V)\ar@{->>}[d] \\
			\gal(S_U/F)  \ar[r] &\ N\rtimes(H/U)
		}$$ 	commutes for all $U\supseteq V$.
	\end{enumerate}
	Then $S=\varinjlim S_U$ is a solution of the differential embedding problem  $(N\rtimes H, R)$.	
\end{lemma}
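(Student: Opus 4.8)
The plan is to verify that $S := \varinjlim S_U$ is a Picard-Vessiot ring over $F$, compute its differential Galois group, and check that it solves the prescribed embedding problem. First I would observe that conditions \ref{family i} make $(S_U, f_{UV})_{U \in \mathcal{U}}$ a directed system of Picard-Vessiot rings over $F$, so Lemma~\ref{lem: direct system of PV rings} applies: $S = \varinjlim S_U$ is a Picard-Vessiot ring over $F$ and $\gal(S/F) = \varprojlim \gal(S_U/F)$. Similarly, condition \ref{family ii} shows that the embeddings $R^U \hookrightarrow S_U$ are compatible with the transition maps of the directed systems $(R^U)_{U \in \mathcal{U}}$ and $(S_U)_{U \in \mathcal{U}}$; passing to the direct limit and using $R = \varinjlim_\mathcal{U} R^U$ (from the Galois correspondence, as recalled just before Lemma~\ref{lemma: safe}), we obtain an embedding $R \hookrightarrow S$ of Picard-Vessiot rings.

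Next I would identify $\gal(S/F)$ with $N \rtimes H$. The isomorphisms $\gal(S_U/F) \cong N \rtimes (H/U)$ supplied by the hypothesis, together with the compatibility \ref{family iii}, give an isomorphism of projective systems $\bigl(\gal(S_U/F)\bigr)_U \cong \bigl(N \rtimes (H/U)\bigr)_U$. Taking projective limits, $\gal(S/F) = \varprojlim_U \gal(S_U/F) \cong \varprojlim_U \bigl(N \rtimes (H/U)\bigr)$. Here the key point is Lemma~\ref{lemma: safe}: since each $U \in \mathcal{U}$ acts trivially on $N$, the induced actions $H/U \curvearrowright N$ are compatible and $\varprojlim_U (N \rtimes H/U) = N \rtimes \varprojlim_U (H/U) = N \rtimes H$ (using $\bigcap_{U \in \mathcal{U}} U = 1$, so $H = \varprojlim_U H/U$). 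So $\gal(S/F) \cong N \rtimes H = G$.

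Finally I would check the commutativity of the solution diagram. The restriction map $\gal(S/F) \twoheadrightarrow \gal(R/F)$ is the projective limit of the restriction maps $\gal(S_U/F) \twoheadrightarrow \gal(R^U/F)$; under the identifications above, each of these is (up to the given isomorphisms) the projection $N \rtimes (H/U) \twoheadrightarrow H/U$, which is what the individual embedding problems $(N \rtimes (H/U), R^U)$ being solved by $S_U$ asserts. Passing to the limit, $\gal(S/F) \twoheadrightarrow \gal(R/F)$ is identified with the projection $N \rtimes H \twoheadrightarrow H$ followed by the isomorphism $H \cong \gal(R/F)$, so the square
$$
\xymatrix{
	\gal(S/F) \ar@{->>}^{\operatorname{res}}[r] & \gal(R/F) \\
	N \rtimes H \ar[u]^\simeq \ar@{->>}[r] & H \ar[u]_\simeq
}
$$
commutes. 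Hence $R \hookrightarrow S$ is a solution of $(N \rtimes H, R)$.

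I expect the main obstacle to be bookkeeping rather than a deep difficulty: one must carefully track that the two "directions" of limit — the direct limit on the level of Picard-Vessiot rings and the inverse limit on the level of Galois groups — are genuinely dual under Lemma~\ref{lem: direct system of PV rings}, and that the isomorphisms $\gal(S_U/F) \cong N \rtimes (H/U)$, which are only given pointwise for each $U$, actually assemble into an isomorphism of projective systems. This is exactly what condition \ref{family iii} guarantees, and the role of Lemma~\ref{lemma: safe} is precisely to ensure that $\varprojlim_U (N \rtimes H/U)$ really is $N \rtimes H$ and not something with a twisted action. Once these compatibilities are in place, the verification that $R \hookrightarrow S$ is a proper solution is formal.
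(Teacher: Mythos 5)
Your proposal is correct and follows essentially the same route as the paper's proof: apply Lemma~\ref{lem: direct system of PV rings} to get $S$ as a Picard-Vessiot ring with $\gal(S/F)=\varprojlim\gal(S_U/F)$, use condition~\ref{family iii} to identify this with $\varprojlim (N\rtimes H/U)=N\rtimes H$, use condition~\ref{family ii} to obtain $R\hookrightarrow S$, and pass the level-wise commutative squares to the limit to verify the solution diagram. Your remark that Lemma~\ref{lemma: safe} is what guarantees $\varprojlim_U(N\rtimes H/U)=N\rtimes H$ with the correct (untwisted) action is a point the paper leaves implicit, but the argument is the same.
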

\begin{proof}
It follows from Lemma \ref{lem: direct system of PV rings} that $S$ is a Picard-Vessiot ring with differential Galois group $\gal(S/F)=\varprojlim\gal(S_U/F)$, with respect to the inverse system defined by~\ref{family i}.
Condition~\ref{family iii} gives an isomorphism between the inverse systems $(\gal(S_U/F))_U$ and $(N\rtimes H/U)_U$. Therefore $\gal(S/F)=\varprojlim\gal(S_U/F)\simeq \varprojlim N\rtimes H/U=N\rtimes H.$
Using~\ref{family ii} we obtain an embedding $R=\varinjlim R^U\hookrightarrow \varinjlim S_U=S$. The commutativity of
	$$
	\xymatrix{
		\gal(S/F) \ar@{->>}[r] \ar@{->>}[d] & \gal(S_U/F) \ar^-\simeq [r] \ar@{->>}[d] & N\rtimes H/U \ar@{->>}[d] \\
		\gal(R/F) \ar@{->>}[r] & \gal(R^U/F) \ar^-\simeq [r]  & H/U	 }
	$$
	implies the commutativity of 
	$$
	\xymatrix{
		\gal(S/F) \ar@{->>}[d] \ar^\simeq[r] & N\rtimes H \ar@{->>}[d] \\
		\gal(R/F) \ar^-\simeq[r]  & H.	
	}
	$$
	Thus $S$ is a solution of $(N\rtimes H, R)$.
\end{proof}

\subsection{Patching problems of differential torsors }\label{sec patching}

In this section, $G$ is an algebraic group over $K$. As always, $F$ is a differential field of characteristic zero with field of constants $K$.

Recall that an \textit{affine $G$-space} is an affine scheme $X=\Spec(R)$ of finite type over $K$ equipped with a morphism action $\alpha \colon X\times G \to X$ such that $\alpha_T\colon X(T)\times G(T)\to X(T)$ defines a right group action of $G(T)$ on $X(T)$ for every $K$-algebra $T$; the dual of $\alpha$ is the co-action $\rho\colon K[X]\to K[X]\otimes_K K[G]$. A non-empty affine $G$-space $X$ is called \textit{$G$-torsor} if for every $K$-algebra $T$ and all $x,y \in X(T)$ there exists a unique $g\in G(T)$ with $x.g=y$, where we write $\alpha_T(x,g)=x.g$.

A $G_F$-torsor $X$ equipped with an extension of the derivation from $F$ to $F[X]$ is called a \textit{differential $G_F$-torsor} if the co-action $\rho\colon F[X]\to F[X]\otimes_F F[G]$ is compatible with the derivation, i.e., is a differential homomorphism (here we consider $F[G]=F\otimes_K K[G]$ as a differential ring extension of $F$ with constants $K[G]$). Differential torsors were introduced in \cite{BachmayrHarbaterHartmannWibmer:DifferentialEmbeddingProblems} for the purpose of solving differential embedding problems of finite type. 
It is possible to define differential torsors in the more general context where $G$ and $X$ are not necessarily of finite type of $K$. However, to be able to apply the results from \cite{BachmayrHarbaterHartmannWibmer:DifferentialEmbeddingProblems}, we restrict ourselves to the finite type situation.

If $R$ is a Picard-Vessiot ring over $F$ with differential Galois group $G$, then $X=\spec(R)$ is a differential $G_F$-torsor. Conversely, if $X=\Spec(R)$ is a differential $G_F$-torsor such that $R$ is a simple differential ring with field of constants $K$, then $R$ is a Picard-Vessiot ring over $F$ with differential Galois group $G$ by \cite[Prop. 1.12]{BachmayrHarbaterHartmannWibmer:DifferentialEmbeddingProblems}. 

To consider patching problems of differential torsors, we work with \textit{differential diamonds} $(F,F_1,F_2,$ $F_0)$, that is, overfields $F_1$ and $F_2$ of $F$ that are contained in a common overfield $F_0$ with compatible extensions of the derivation on $F$ such that $F_1\cap F_2=F$. We say that such a diamond has the \textit{factorization property} if for every matrix $A\in \GL_n(F_0)$ there exist matrices $B\in \GL_n(F_1)$ and $C \in \GL_n(F_2)$ with $A=BC$.  

  Let $(F,F_1,F_2,F_0)$ be a differential diamond.  
Recall that a {\em patching problem} of differential $G$-torsors in this situation 
is a tuple $(X_1,X_2,X_0,\nu_1,\nu_2)$ where $X_i = \Spec(S_i)$ is a differential $G_{F_i}$-torsor, and where $\nu_i:F_0 \times_{F_i} X_i \to X_0$ is an isomorphism of differential $G_{F_0}$-torsors for $i=1,2$.  
On the level of coordinate rings, this patching problem corresponds to the tuple $(S_1,S_2,S_0,\Theta_1,\Theta_2)$, where $\Theta_i = (\nu_i^*)^{-1}:F_0 \otimes_{F_i}S_i\to S_0$ is an isomorphism of differential $F_0$-algebras that respects the $G$-co-action.  
We also refer to this tuple as a patching problem of differential $G$-torsors.  A {\em solution} to the patching problem is a differential torsor over $F$ that induces the torsors $X_i$ compatibly via base change; i.e., a differential $G$-torsor $X = \Spec(S)$ over $F$ 
together with $F_i$-isomorphisms of $G_{F_i}$-torsors $\gamma_i: F_i \times_F X \to X_i$ for $i=1,2$
such that the two maps $\nu_i \circ (\mathrm{id}_{F_0}\otimes_{F_i} \gamma_i): F_0 \times_F X \to X_0$ agree.  On the level of coordinate rings, we can write $\Phi_i = (\gamma_i^{-1})^*:F_i \otimes_F S \to S_i$, and the compatibility condition is that 
\[\Theta_1\circ(\operatorname{id}_{F_0}\otimes_{F_1}\Phi_1)=
\Theta_2\circ(\operatorname{id}_{F_0}\otimes_{F_2}\Phi_2).\]
We then also refer to $(S,\Phi_1,\Phi_2)$ as a solution to the patching problem.

Building on a result in \cite{HarbaterHartmannKrashen:LocalGlobalPrinciplesForTorsors} on patching problems of $G$-torsors (without differential structure) it was shown in \cite[Thm. 2.2]{BachmayrHarbaterHartmannWibmer:DifferentialEmbeddingProblems} that every patching problem of differential $G$-torsors has a solution if $(F,F_1,F_2,F_0)$ has the factorization property. Moreover, this solution $(S,\Phi_1,\Phi_2)$ is unique up to differential isomorphism and it is given by $S=\Theta_1(S_1)\cap\Theta_2(S_2)\subseteq S_0$, with derivation and $G$-action given by restriction from those on $\Theta_1(S_1)$ and $\Theta_2(S_2)$ and with $\Phi_i$ induced by $\Theta_i^{-1}$.

\subsection{Solving split differential embedding problems}

Recall that if $H$ is a closed subgroup of an algebraic group $G$ over a field $K$, and if $Y$ is an $H$-torsor over $K$, then there is a natural {\em induced torsor} $X=\Ind_H^G(Y)$ that satisfies a universal mapping property and is given by $(Y \times G)/H$, where $H$ acts on $Y \times G$ by $(y,g).h = (y. h,h^{-1}g)$.  On the level of coordinate rings, if $Y = \Spec(R)$, then
\[\Ind_H^G(R) := K[X]=(K[Y]\otimes_K K[G])^H=\{ f \in K[Y]\otimes_K K[G] \mid \rho(f)=f\otimes 1\},\] 
where $\rho: K[Y]\otimes_K K[G] \to K[Y]\otimes_K K[G]\otimes_K K[H]$ is the co-action corresponding to the action of $H$ on $Y\times G$. 
See \cite[Proposition~A.8]{BachmayrHarbaterHartmannWibmer:DifferentialEmbeddingProblems} and its proof for details.  Another interpretation of induced torsors is as follows: the set of isomorphism classes of $G$-torsors over $K$ is classified by the Galois cohomology set $H^1(K,G)$, and similarly for $H$-torsors.  If $Y$ is an $H$-torsor over $K$, corresponding to $\alpha \in H^1(K,H)$, then $\Ind_H^G(Y)$ is the $G$-torsor corresponding to the image of $\alpha$ under the natural map $H^1(K,H) \to H^1(K,G)$.

In the differential context, i.e., if $R$ is the coordinate ring of a differential $H_F$-torsor, then
$$\Ind_{H_F}^{G_F}(R)=(R\otimes_F F[G])^{H_F}$$
is the coordinate ring of a differential $G_F$-torsor. The derivation on $(R\otimes_F F[G])^{H_F}$ is the restriction of the derivation on $R\otimes_F F[G]$.

\begin{theo}\label{theo: main patching}
	Let $(F,F_1,F_2,F_0)$ be a differential diamond with the factorization property and define $K=F^\del$. 
	Let $G=N\rtimes H$ be a proalgebraic group over $K$ such that $N$ is algebraic. Assume further that $R$ is a Picard-Vessiot ring over $F$ with differential Galois group $H$ and such that $R\subseteq F_1$ as a differential $F$-subalgebra. Finally, we assume that there exists a Picard-Vessiot ring $R_1/F_1$ with differential Galois group $N_{{F_1}^{\! \del}}$ and with $R_1\subseteq F_0$ as a differential $F_1$-subalgebra.
	
	Then the differential embedding problem $(N\rtimes H, R)$ over $F$ has a solution.
\end{theo}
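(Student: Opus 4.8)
The plan is to reduce the given (split, admissible) differential embedding problem $(N \rtimes H, R)$ to a compatible family of split differential embedding problems of finite type $(N \rtimes (H/U), R^U)$, indexed by a suitable directed family $\mathcal{U}$ of coalgebraic subgroups of $H$, solve each of these finite-type problems using the patching machinery (and in particular the induced-torsor construction and Theorem~\ref{theo: main patching}'s finite-type predecessor \cite[Theorem~2.14]{BachmayrHarbaterHartmannWibmer:DifferentialEmbeddingProblems}, or rather a re-derivation of it in the present setup), arrange these solutions into a direct system satisfying the compatibility conditions \ref{family i}--\ref{family iii} of Lemma~\ref{lemma: reduce split to finite type}, and then pass to the limit via that lemma. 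So the global structure is: choose $\mathcal{U}$ via Lemma~\ref{lemma: safe}; solve finite-type problems coherently; glue by Lemma~\ref{lemma: reduce split to finite type}.

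First I would apply Lemma~\ref{lemma: safe} to the action of $H$ on $N$ to obtain a directed family $\mathcal{U}$ of coalgebraic subgroups of $H$, with $\bigcap_{U \in \mathcal{U}} U = 1$, each acting trivially on $N$, so that for every $U \in \mathcal{U}$ the semidirect product $N \rtimes (H/U)$ makes sense and $R^U$ is a Picard-Vessiot ring of finite type with differential Galois group $H/U$ (by the Galois correspondence and the discussion preceding Lemma~\ref{lemma: safe}). For a fixed $U$, the finite-type split differential embedding problem $(N \rtimes (H/U), R^U)$ is to be solved by the torsor-patching approach of Section~\ref{sec patching}: one builds a patching problem of differential $(N \rtimes (H/U))_F$-torsors over the diamond $(F, F_1, F_2, F_0)$. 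The key geometric inputs are the hypotheses: $R \subseteq F_1$, so $\Spec(R^U)$ (with $R^U \subseteq R \subseteq F_1$) furnishes, via the induced torsor $\Ind_{(H/U)_{F_1}}^{(N \rtimes H/U)_{F_1}}(F_1 \otimes_F R^U)$, a differential $(N \rtimes H/U)_{F_1}$-torsor over $F_1$; while $R_1 \subseteq F_0$ with differential Galois group $N_{F_1^\del}$ furnishes an $N_{F_2}$-torsor datum that, combined with the trivial $H/U$-part over $F_2$, gives an $(N \rtimes H/U)_{F_2}$-torsor over $F_2$; base-changing both to $F_0$ and using the factorization property one checks the two $F_0$-torsors are isomorphic (both are induced from the same $N_{F_0}$-data together with the $F_0$-point of $H/U$ coming from $R^U \subseteq F_1 \subseteq F_0$), so the patching problem has a solution $S_U$ by \cite[Thm.~2.2]{BachmayrHarbaterHartmannWibmer:DifferentialEmbeddingProblems}. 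One must then verify that the resulting differential ring $S_U$ is $\partial$-simple with constant field $K$ — hence by \cite[Prop.~1.12]{BachmayrHarbaterHartmannWibmer:DifferentialEmbeddingProblems} a Picard-Vessiot ring — and that $\gal(S_U/F) \cong N \rtimes (H/U)$ compatibly with the restriction to $\gal(R^U/F) = H/U$, i.e.\ that $S_U$ genuinely solves $(N \rtimes (H/U), R^U)$; this is exactly the content of the finite-type theorem \cite[Theorem~2.14]{BachmayrHarbaterHartmannWibmer:DifferentialEmbeddingProblems}, which one invokes or repeats in this notation.

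The heart of the argument — and the main obstacle — is \emph{coherence across $U$}: producing the $S_U$ not independently but together with transition maps $f_{UV}\colon S_U \to S_V$ for $U \supseteq V$ satisfying \ref{family i}--\ref{family iii} of Lemma~\ref{lemma: reduce split to finite type}. The uniqueness clause in \cite[Thm.~2.2]{BachmayrHarbaterHartmannWibmer:DifferentialEmbeddingProblems} (the solution $S = \Theta_1(S_1) \cap \Theta_2(S_2) \subseteq S_0$ is unique up to differential isomorphism) is what makes this possible, but one has to set up the patching data functorially in $U$. Concretely: the $F_1$-torsor datum $\Ind_{(H/U)_{F_1}}^{(N\rtimes H/U)_{F_1}}(F_1 \otimes_F R^U)$ depends on $U$ through $R^U$, and for $U \supseteq V$ the inclusion $R^U \hookrightarrow R^V$ induces a compatible map of induced torsors; likewise the $F_2$-side built from the fixed $R_1$ and the (now varying) trivial $H/U$-factor is visibly functorial since $H/V \twoheadrightarrow H/U$; and the isomorphisms $\Theta_i$ over $F_0$ can be chosen compatibly because both sides are induced from data that is natural in $U$. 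Taking $S_U = \Theta_1(\text{--}) \cap \Theta_2(\text{--}) \subseteq S_0^{(U)}$ inside a coherent system of $S_0^{(U)}$'s then yields the $f_{UV}$ as restrictions of the natural maps on the $F_0$-level, and conditions \ref{family i} (functoriality of the construction), \ref{family ii} (compatibility of $R^U \hookrightarrow R^V$ with $R^U \hookrightarrow S_U$, which holds because $\Theta_1$ on the $F_1$-side is compatible with the torsor inclusions), and \ref{family iii} (compatibility of the Galois-group identifications, which follows from the canonical nature of the isomorphism $\gal(S_U/F) \cong N \rtimes (H/U)$ produced by \cite[Theorem~2.14]{BachmayrHarbaterHartmannWibmer:DifferentialEmbeddingProblems}) can all be checked. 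Finally, Lemma~\ref{lemma: reduce split to finite type} applies to the system $(S_U, f_{UV})$ and gives $S = \varinjlim_U S_U$ as a solution of $(N \rtimes H, R)$, completing the proof. The place requiring the most care is making every one of the patching ingredients — the induced torsors over $F_1$, the $N$-torsor data over $F_2$, and the gluing isomorphisms over $F_0$ — simultaneously natural in $U$, so that the abstract existence-and-uniqueness of patched solutions upgrades to an honest direct system; granting that, the limit argument is formal.
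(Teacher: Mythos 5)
Your overall architecture coincides with the paper's: apply Lemma~\ref{lemma: safe} to obtain the family $\mathcal{U}$, solve each finite-type problem $(N\rtimes(H/U),\,R^U)$ by patching induced differential torsors over the diamond, obtain transition maps $f_{UV}$ as restrictions of natural inclusions at the $F_0$-level, verify conditions~(i)--(iii), and conclude via Lemma~\ref{lemma: reduce split to finite type}. There is, however, a concrete error in your patching setup: you have assigned the two local patches to the wrong sides of the diamond. You place the torsor induced from $R^U$ over $F_1$, and you claim that $R_1$ ``furnishes an $N_{F_2}$-torsor datum'' over $F_2$. The latter cannot work: $R_1$ is by hypothesis a Picard-Vessiot ring over $F_1$, and $F_2$ is not an overfield of $F_1$ (the two only share $F=F_1\cap F_2$ inside $F_0$), so $R_1$ cannot produce a torsor over $F_2$ by any base change. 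The assignment forced by the hypotheses is the reverse one: the $F_1$-patch is $\Ind_{N_{F_1}}^{(G/U)_{F_1}}(R_1)$, using that $R_1$ lives over $F_1$ and embeds in $F_0$, and the $F_2$-patch is $F_2\otimes_F\Ind_{(H/U)_F}^{(G/U)_F}(R^U)$, where the induced torsor is formed over $F$ (where $R^U$ lives) and then base-changed to $F_2$. The hypothesis $R\subseteq F_1\subseteq F_0$ is used to define the gluing isomorphism from this $F_2$-patch to the trivial torsor $F_0[G/U]$, not to place $R^U$ over $F_1$.

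With that correction the rest of your plan is the paper's proof: both gluing maps $\Theta_{1U},\Theta_{2U}$ land in $F_0[G/U]$ via multiplication, one sets $S_U=\Theta_{1U}(S_{1U})\cap\Theta_{2U}(S_{2U})$, and for $U\supseteq V$ the inclusions $R^U\subseteq R^V$ and $F_i[G/U]\subseteq F_i[G/V]$ give $S_{iU}\subseteq S_{iV}$ compatibly with the $\Theta$'s, whence the $f_{UV}$. Your sketch of why (i)--(iii) hold identifies the right mechanisms (functoriality of induced torsors, commutativity at the $F_0$-level together with injectivity, compatibility of comultiplications), but it stays at the level of assertions; the one point needing real care is (ii), for which one must know the explicit form of the embedding $R^U\hookrightarrow S_U$ as the restriction of $R^U\to R^U\otimes_F F[H/U]\to F_0[G/U]$, which the paper extracts from the proof of the finite-type result rather than from its statement.
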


\begin{proof}
The strategy of our proof is to use Lemmas~\ref{lemma: safe} and~\ref{lemma: reduce split to finite type} to reduce to the case of finite type, where the result was shown in \cite[Prop. 2.11]{BachmayrHarbaterHartmannWibmer:DifferentialEmbeddingProblems}.  To be able to use Lemma~\ref{lemma: reduce split to finite type}, we will need to verify the compatibility conditions~\ref{family i}, \ref{family ii}, \ref{family iii} there; and for that we will need to know the explicit form of the solutions $S_U$ to the finite type problems.  We therefore first recall the proof of the assertion in the finite type case, as given in \cite{BachmayrHarbaterHartmannWibmer:DifferentialEmbeddingProblems}.

So assume that $H$ (and hence $G$) is an algebraic group.
The tuple $$(S_1, S_2, S_0, \Theta_1, \Theta_2)=\left(\Ind_{N_{F_1}}^{G_{F_1}}(R_1),\  F_2\otimes_F\Ind_{H_F}^{G_F}(R),\ F_0[G],\ \Theta_1,\ \Theta_2\right)$$
is a patching problem of differential $G$-torsors, where
$$\Theta_1\colon F_0\otimes_{F_1}S_1=F_0\otimes_{F_1} (R_1\otimes_{F_1} F_1[G])^{N_{F_1}}
\longrightarrow^{\!\!\!\!\!\!\!\!\!\!\sim\,}\ F_0[G]$$
is the restriction of $F_0\otimes_{F_1}R_1\otimes_{F_1} F_1[G]\to F_0[G],\ a\otimes r\otimes f\mapsto arf$ and
$$\Theta_2\colon F_0\otimes_{F_2}S_2=F_0\otimes_F(R\otimes_F F[G])^{H_F}
\longrightarrow^{\!\!\!\!\!\!\!\!\!\!\sim\,}\ F_0[G]$$
is the restriction of $F_0\otimes_F R\otimes_F F[G]\longrightarrow F_0[G], \ a\otimes r\otimes f\mapsto arf.$
The solution $S=\Theta_1(S_1)\cap \Theta_2(S_2)$ of this patching problem is a solution of the differential embedding problem $(N\rtimes H, R)$. In more detail: 
The derivation on $S$ is the restriction of the derivation on $F_0[G]$, and the co-action $S\to S\otimes_F F[G]$ is the restriction of the comultiplication $F_0[G]\to F_0[G]\otimes_{F_0} F_0[G]$; these are well-defined on $S$ because they are the common restrictions of the corresponding maps for 
$\Theta_1(S_1)$ and $\Theta_2(S_2)$.
The inclusion $R\to S$ is the restriction of $R\to R\otimes_F F[H]\to F_0[G]$, where the first map is the co-action and the second map is $r\otimes f\mapsto rf$. (To verify the latter one has to trace through the explicit form of the middle vertical map in the diagram in the proof of \cite[Prop. 2.11]{BachmayrHarbaterHartmannWibmer:DifferentialEmbeddingProblems}.)

We now return to the proof in the general situation, where $H$ need not be of finite type. 
By applying Lemma~\ref{lemma: safe} to the action of $H$ on $N$, we obtain a family 
$\mathcal{U}$ of coalgebraic subgroups of $H$ with the properties asserted there. Since each $U\in\mathcal{U}$ acts trivially on $N$, we can regard $U$ as a normal closed subgroup of $G$, and we can form the quotient $G/U = N \rtimes (H/U)$ of $G = N \rtimes H$. For every $U\in\mathcal{U}$ we have a differential embedding problem $(G/U, R^U)=(N\rtimes(H/U),R^U)$ of finite type with a solution $S_U$ defined (as above) as the solution of the patching problem
$$(S_{1U},S_{2U},S_{0U}, \Theta_{1U},\ \Theta_{2U})= \left(\Ind_{N_{F_1}}^{(G/U)_{F_1}}(R_1),\  F_2\otimes_F\Ind_{(H/U)_F}^{(G/U)_F}(R^U),\ F_0[G/U],\ \Theta_{1U},\ \Theta_{2U}\right).$$
That is, $S_U=\Theta_{1U}(S_{1U})\cap \Theta_{2U}(S_{2U})$ with $\Theta_{1U}$ and $\Theta_{2U}$ defined like $\Theta_1$ and $\Theta_2$ above.
If $U\supseteq V$ are from $\mathcal{U}$, then $R^U\subseteq R^V$
and $F_i[G/U]\subseteq F_i[G/V]$ for $i=0,1,2$. Therefore $$S_{1U}=(R_1\otimes_{F_1}F_1[G/U])^{N_{F_1}}\subseteq (R_1\otimes_{F_1}F_1[G/V])^{N_{F_1}}=S_{1V}$$ 
and similarly, since
$$
\xymatrix{
R^U\otimes_F F[G/U] \ar^-{\rho}[r] \ar@{^(->}[d] & R^U\otimes_F F[G/U]\otimes_F F[H/U] \ar@{^(->}[d] \\
R^V\otimes_F F[G/V]  \ar^-{\rho}[r] & R^V\otimes_F F[G/V]\otimes_F F[H/V]
}
$$
commutes, we have $$\Ind_{(H/U)_F}^{(G/U)_F}(R^U)=(R^U\otimes_F F[G/U])^{(H/U)_F}\subseteq (R^V\otimes_F F[G/V])^{(H/V)_F}=\Ind_{(H/V)_F}^{(G/V)_F}(R^V).$$
So $S_{iU}\subseteq S_{iV}$ for $i=0,1,2$. Moreover, 
\begin{equation} \label{eqn: commutei}
\xymatrix{
F_0\otimes_{F_1}S_{iU} \ar^-{\Theta_{iU}}[r] \ar[d] & S_{0U} \ar[d] \\
F_0\otimes_{F_1} S_{iV} \ar^-{\Theta_{iV}}[r] & S_{0V} 
}
\end{equation}
commutes for $i=1,2$.  Hence $S_{0U}\subseteq S_{0V}$ restricts to an inclusion $f_{UV}\colon S_U\to S_V$ of differential $F$-algebras.  

We claim that the solutions $S_U$ of the differential embedding problems $(N\rtimes(H/U),R^U)$, together with the morphisms $f_{UV}\colon S_U\to S_V$, satisfy the three conditions in the hypothesis of Lemma~\ref{lemma: reduce split to finite type}.  Once this is shown, the conclusion of that lemma yields the desired solution to the differential embedding problem $(N\rtimes H, R)$ over $F$.  

For condition~\ref{family i} of Lemma~\ref{lemma: reduce split to finite type}, note that these compatibilities hold for the corresponding morphisms between the associated patching problems, by the functoriality of induced torsors.  By the equivalence of categories between patching problems and solutions, it follows that condition~\ref{family i} holds as well for the morphisms $f_{UV}$.

Concerning condition~\ref{family ii}, consider the compositions
$R^U \hookrightarrow R^U\otimes_F F[H/U] \hookrightarrow F_0[G/U] = S_{0U}$
for all $U\in\mathcal{U}$.  For $U \supseteq V$, this yields a commutative 
square 
\begin{equation} \label{eqn: commuteii}
\xymatrix{
R^U \ar@{^(->}[r] \ar@{^(->}[d] & S_{0U} \ar@{^(->}[d] \\
R^V \ar@{^(->}[r]  & S_{0V}.
}
\end{equation}
Here the horizontal maps factor through $S_U$ and $S_V$ respectively, yielding a diagram
\[
\xymatrix{
R^U \ar@{^(->}[r] \ar@{^(->}[d] & S_U \ar@{^(->}[r] \ar@{^(->}[d] & S_{0U} \ar@{^(->}[d] \\
R^V \ar@{^(->}[r] & S_V \ar@{^(->}[r] & S_{0V}.
}
\]
The right hand square of this diagram commutes, because $\Theta_{iV}$ restricts to $\Theta_{iU}$ for $i=1,2$.
Since the maps in the above diagram are injective, and since diagram~(\ref{eqn: commuteii}) commutes, it follows that the left hand square in the above diagram also commutes; i.e., condition~\ref{family ii} holds.

For condition~\ref{family iii}, note that the comultiplication maps induce a commutative square 
\[
\xymatrix{
F_0[G/U] \ar[r] \ar[d] & F_0[G/U]\otimes_{F_0} F_0[G/U] \ar[d] \\
F_0[G/V] \ar[r] & F_0[G/V]\otimes_{F_0} F_0[G/V],	
}
\]
for $U \supseteq V$.  This implies the commutativity of 
\[
\xymatrix{
S_U \ar[r] \ar[d] & S_U\otimes_F F[G/U] \ar[d] \\
S_V \ar[r] & S_V\otimes_F F[G/V].	
}
\]
So the structure of $\Spec(S_V)$ as a differential $G/V$-torsor induces that of 
$\Spec(S_U)$ as a differential $G/U$-torsor.  This yields condition~\ref{family iii}, completing the proof of the claim and hence the theorem.
\end{proof}

\section{Matzat's conjecture} \label{sec: Matzat}

In this section, we conclude our proof that every split admissible differential embedding problem over 
$\C(x)$ has a solution, and thereby prove our main result (Theorem~\ref{result}).
Our strategy is first to show that the induced embedding problem over $\C((t))(x)$ has a solution (Theorem \ref{theo: solve F/E DEB}), and then to descend this solutions in a suitable way. 

In the proof of Theorem \ref{theo: solve F/E DEB}, we will work with differential diamonds of fields $(F,F_1,F_2,F_0)$ with $F_0$ a certain iterated Laurent series field. The following lemma will allow us to work inside such Laurent series fields.

\begin{lemma} \label{lemma: embed PV ring}
	Let $k$ be an algebraically closed field of characteristic zero and $E=k(x)$. If $R/E$ is a Picard-Vessiot ring with $\rank(R)<|k|$, then there exists an $a\in k$ such that $R$ embeds into $k((x-a))$ as a differential $k(x)$-algebra. 
\end{lemma}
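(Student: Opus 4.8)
The strategy is to realize $R$ inside a suitable Laurent series field by a specialization/analytic-continuation argument, using the fact that $|k|$ is uncountable while $\rank(R)<|k|$ means $R$ is ``small'' compared to $k$.

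First I would reduce to the case that $R$ is a Picard-Vessiot ring of finite type. Indeed, since $R$ is a directed union of Picard-Vessiot rings $R_i$ of finite type (as recalled in Section~\ref{subsec: PV dgg}), and $\rank(R)<|k|$, there are fewer than $|k|$ such $R_i$'s needed; a compatible family of embeddings $R_i\hookrightarrow k((x-a_i))$ into possibly different fields is not yet enough, so one wants a \emph{single} $a$ that works for all of them simultaneously. The finite type case itself is classical: a Picard-Vessiot ring for $\partial(y)=Ay$ with $A\in k(x)^{n\times n}$ embeds into $k((x-a))$ as a differential $k(x)$-algebra for \emph{any} $a\in k$ that is not a pole of (an entry of) $A$, since one can solve $\partial(Y)=AY$ formally (equivalently, convergently, but we only need the formal statement) with $Y(a)$ the identity matrix, giving a fundamental solution matrix with entries in $k((x-a))=k[[x-a]]$, and the resulting differential $k(x)$-algebra homomorphism $R\to k((x-a))$ is automatically injective because $R$ is $\partial$-simple.

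The heart of the matter is the uniformity over the (possibly large, but $<|k|$ in number) finite type pieces. Here is where uncountability of $k$ enters. For each $i$, the set of ``bad'' points $a\in k$ — those that are poles of the matrix $A_i$ defining $R_i$ — is finite. So the set of $a\in k$ that are bad for \emph{some} $i$ has cardinality at most $\rank(R)\cdot\aleph_0<|k|$ (using $\rank(R)\geq\aleph_0$ in the non-finite-type case, and the finite-type case being trivial). Hence there exists $a\in k$ that is a good point for every $R_i$ at once. For such an $a$, I get compatible differential $k(x)$-embeddings $\varphi_i\colon R_i\hookrightarrow k((x-a))$; compatibility along the inclusions $R_i\hookrightarrow R_j$ follows from uniqueness of the formal solution with a prescribed initial condition at $a$ (normalizing the fundamental solution matrix to be the identity at $a$ makes the choice canonical and hence functorial in the family of equations). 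Passing to the direct limit yields a differential $k(x)$-embedding $R=\varinjlim R_i\hookrightarrow k((x-a))$, as desired.

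\textbf{Main obstacle.} The delicate point is the compatibility of the embeddings $\varphi_i$ as $i$ ranges over the directed system: a priori an arbitrary embedding $R_i\hookrightarrow k((x-a))$ need not extend to $R_j$ for $j\geq i$. This is handled by a normalization: one fixes, once and for all, the initial condition $Y_i(a)=I$ at the common base point $a$ for the fundamental solution matrices; then the embeddings are uniquely determined and the diagrams commute. One must also check that the chosen $a$ avoids the (finitely many) poles occurring in \emph{each} equation in the defining family, which is exactly where $\rank(R)<|k|$ together with $|k|$ uncountable is used: the union of fewer than $|k|$ finite sets still has cardinality $<|k|$, so a good $a$ exists. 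A minor point to verify is that $R_i$ being $\partial$-simple forces $\varphi_i$ to be injective, so that the limit map is injective as well; this is immediate since the kernel is a $\partial$-ideal.
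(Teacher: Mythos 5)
Your choice of the point $a$ is exactly the paper's argument: the defining family has $\rank(R)<|k|$ members, each contributing finitely many poles, so some $a\in k$ is a regular point for every equation simultaneously, and each equation then has a formal fundamental solution matrix in $\Gl_{n_i}(k[[x-a]])$. The gap is in your assembly step. You claim that normalizing the fundamental solution matrices by $\hat Y_i(a)=I$ makes the embeddings $\varphi_i\colon R_i\hookrightarrow k((x-a))$ canonical and hence compatible along $R_i\hookrightarrow R_j$. But the abstract Picard--Vessiot ring $R_i$ carries no evaluation at $a$, so there is no canonical isomorphism from $R_i$ onto the concrete subring $R_i'\subseteq k((x-a))$ generated by $\hat Y_i$: the rule ``$Y_i\mapsto \hat Y_i$'' need not even be a well-defined homomorphism, since the differential ideal of relations satisfied by the chosen $Y_i\in\Gl_{n_i}(R_i)$ may differ from that satisfied by $\hat Y_i$ (the two fundamental matrices differ by an unknown constant matrix). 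For example, if $R$ contains $Y_1$ with $\partial Y_1=Y_1$ and $Y_2=3Y_1^2$ with $\partial Y_2=2Y_2$, the relation $Y_2=3Y_1^2$ is not preserved by sending both to the solutions normalized to equal $1$ at $a=0$. What is true is that any two embeddings of $R_i$ into $k((x-a))$ differ by an element of $\gal(R_i/E)(k)$, so $\varphi_j|_{R_i}$ and $\varphi_i$ agree only up to a Galois twist; making the system compatible would require an additional argument (e.g.\ transfinitely extending embeddings using surjectivity of $\gal(R_j/E)(k)\to\gal(R_i/E)(k)$), which you do not supply.

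The paper sidesteps this entirely: it takes the $E$-subalgebra $R'$ of $k((x-a))$ generated by \emph{all} the $Y_i$'s and the inverses of their determinants at once, observes that $R'$ is a Picard--Vessiot ring for the full family (the only thing to check being that $k((x-a))$ introduces no new constants), and invokes uniqueness of Picard--Vessiot rings a single time to conclude $R\cong R'$. You should replace your limit-of-normalized-embeddings step by this one-shot uniqueness argument (or properly justify the compatibility); as written, the ``main obstacle'' you identify is not actually overcome by the normalization you propose.
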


\begin{proof}
	This statement is well known for Picard-Vessiot rings of finite type. So we may assume that $\rank(R)$ is infinite.
	
	We know that $R$ is the Picard-Vessiot ring for a family $(\partial(y)=A_i y)_{i\in I}$ with $|I|=\rank(R)<|k|$. The set of all poles of all coefficients of all $A_i$'s has cardinality $|I|<|k|$. Thus there exists an $a\in k$ such that all equations $\partial(y)=A_iy$ are regular at $a$ and therefore have a fundamental solution matrix $Y_i\in\Gl_{n_i}(k((x-a)))$. The $E$-subalgebra $R'$ of $k((x-a))$ generated by all $Y_i$'s and the inverse of their determinants is a Picard-Vessiot ring for $(\partial(y)=A_i y)_{i\in I}$. By the uniqueness of Picard-Vessiot rings, $R\cong R'$.
\end{proof}

The following lemma generalizes \cite[Prop. 2.4]{BachmayrHartmannHarbaterPopLarge} from Picard-Vessiot rings of finite type to arbitrary Picard-Vessiot rings.

\begin{lemma} \label{lemma: extend constants for PV ring}
	Let $E$ be a differential field and $K$ a field extension of $k=E^\partial$. Then 
$E\otimes_k K$ (with trivial derivation on $K$) is an integral domain whose field of fractions	
$F$ is a differential field with $F^\partial=K$. Moreover, if $R$ is a Picard-Vessiot ring over $E$ with differential Galois group $G$, then $R'=R\otimes_E F$ is a Picard-Vessiot ring over $F$ with differential Galois group $G'=G_{K}$.
\end{lemma}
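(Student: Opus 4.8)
The plan is to prove Lemma~\ref{lemma: extend constants for PV ring} in three stages: first establishing that $F:=\Frac(E\otimes_k K)$ is a differential field with constants exactly $K$; then verifying that $R':=R\otimes_E F$ satisfies the defining axioms of a Picard-Vessiot ring over $F$; and finally identifying its differential Galois group with $G_K$.

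First I would address the constants. That $E\otimes_k K$ is an integral domain follows from a standard argument: $k$ is the field of constants, hence $E/k$ is a (possibly infinite) field extension, and a tensor product $E\otimes_k K$ of a field extension with another extension of $k$ is reduced, and in fact a domain once one checks that $E$ and $K$ are linearly disjoint in an appropriate sense — here one can reduce to the finitely generated case and use that $K$ is a field. (Alternatively, cite the finite-type version \cite[Prop. 2.4]{BachmayrHartmannHarbaterPopLarge} and take a direct limit over finitely generated subfields of $E$ over $k$, which is the natural way to ``generalize'' that proposition as the statement advertises.) Equipping $E\otimes_k K$ with the derivation $\partial\otimes\id$, the key point $F^\partial=K$ is again a direct-limit argument: write $E=\varinjlim E_j$ with $E_j/k$ finitely generated differential subfields, so $E\otimes_k K=\varinjlim(E_j\otimes_k K)$ and $F=\varinjlim\Frac(E_j\otimes_k K)$; each $\Frac(E_j\otimes_k K)$ has constants $K$ by the finite-type case, and constants are preserved under this filtered union.

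Next, for $R'=R\otimes_E F$: pick defining matrices $A_i\in E^{n_i\times n_i}$, $i\in I$, for $R$, with fundamental solution matrices $Y_i\in\Gl_{n_i}(R)$. Then the images of the $Y_i$ in $R'$ are fundamental solution matrices for the same equations now viewed over $F$, and $R'$ is generated over $F$ by their entries and the inverse determinants. The two substantive conditions are $\partial$-simplicity of $R'$ and $R'^\partial=K$. For these I would use the characterization recalled in the excerpt: $R$ is a Picard-Vessiot ring iff $\Spec(R)$ is a differential $G_E$-torsor with $R$ $\partial$-simple and $R^\partial=k$, equivalently iff the canonical map $R\otimes_k k[G]\to R\otimes_E R$ is an isomorphism. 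Base-changing the latter isomorphism along $E\to F$ gives that $R'\otimes_K K[G']\to R'\otimes_F R'$ is an isomorphism, where $G'=G_K$; combined with $F^\partial=K$ (Stage 1) this yields $R'^\partial=K$ and $\partial$-simplicity of $R'$ via \cite[Prop. 1.12]{BachmayrHarbaterHartmannWibmer:DifferentialEmbeddingProblems} (noting that when the base has algebraically closed constants the extra conditions are automatic, but in general one argues that a differential torsor coordinate ring with the right constants is $\partial$-simple). Finally, the Galois group computation: $\gal(R'/F)$ is represented by $(R'\otimes_F R')^\partial=(R\otimes_E R\otimes_E F)^\partial$, and since $R\otimes_E R\cong R\otimes_k k[G]$, this is $(R\otimes_k k[G]\otimes_E F)^\partial=(R'\otimes_k k[G])^\partial$; using $R'^\partial=K$ and that $k[G]$ has trivial derivation, this equals $K\otimes_k k[G]=K[G_K]=K[G']$, so $\gal(R'/F)\cong G'=G_K$ as claimed.

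The main obstacle I anticipate is Stage 1, specifically the claim $F^\partial=K$ when $E/k$ is not finitely generated — one must be careful that taking fraction fields commutes with the filtered colimit and that no new constants sneak in at the colimit level; the rest reduces cleanly to base-changing the torsor isomorphism $R\otimes_k k[G]\iso R\otimes_E R$, which is robust and does not require finite type. A secondary subtlety is ensuring $\partial$-simplicity of $R'$ directly (rather than just ``no new constants''), for which invoking the differential-torsor criterion of \cite[Prop. 1.12]{BachmayrHarbaterHartmannWibmer:DifferentialEmbeddingProblems} after base change is the cleanest route, since a differential $G_F$-torsor whose coordinate ring has constant field $K=F^\partial$ is automatically a Picard-Vessiot ring.
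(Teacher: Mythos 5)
Your Stage 1 is essentially correct, though it takes a different route from the paper: the paper observes that $k=E^\partial$ is relatively algebraically closed in $E$ (so $E\otimes_kK$ is a domain) and applies \cite[Lemma 2.3]{DiVizioHardouinWibmer:DifferenceGaloisTheoryOfLinearDifferentialEquations} to the $\partial$-simple ring $E$ to conclude $F^\partial=K$, whereas you reduce to finitely generated subfields by a filtered colimit; both work.

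The genuine gap is in Stages 2--3. You correctly isolate the two substantive conditions, $R'^\partial=K$ and $\partial$-simplicity of $R'$, but your proposed derivation of them is a non sequitur. Base-changing the torsor isomorphism $R\otimes_kk[G]\cong R\otimes_ER$ along $E\to F$ does give $R'\otimes_KK[G_K]\cong R'\otimes_FR'$, but this isomorphism \emph{combined with} $F^\partial=K$ still does not yield $R'^\partial=K$ or simplicity: for example, $F\times F$ with the exchange action is a differential $(\mathbb{Z}/2)_F$-torsor over a field with $F^\partial=K$, yet its constants are $K\times K$ and it is not $\partial$-simple. Moreover, \cite[Prop.~1.12]{BachmayrHarbaterHartmannWibmer:DifferentialEmbeddingProblems} takes simplicity and the constants condition as \emph{hypotheses} rather than delivering them, and (as the paper notes in Section~\ref{sec patching}) it is only available for $G$ of finite type, whereas here $G$ is an arbitrary proalgebraic group --- exactly the case this lemma is meant to cover. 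The missing ingredient is again \cite[Lemma 2.3]{DiVizioHardouinWibmer:DifferenceGaloisTheoryOfLinearDifferentialEquations}: since $R$ is $\partial$-simple with constants $k$, the ring $R\otimes_kK$ is $\partial$-simple with constants $K$, and $R'=R\otimes_EF$ is a localization of $R\otimes_E(E\otimes_kK)=R\otimes_kK$, hence itself $\partial$-simple with constants $K$. (Alternatively you could write $R=\varinjlim R_i$ with each $R_i$ of finite type, apply the known finite-type case \cite[Prop.~2.4]{BachmayrHartmannHarbaterPopLarge}, and pass to the limit via Lemma~\ref{lem: direct system of PV rings}; but that is not what your torsor argument does.) Once $R'^\partial=K$ and simplicity are established, your Hopf-algebraic identification of the Galois group is fine, though the paper's direct functorial computation $G_K(T)=\Aut^\del(R'\otimes_KT/F\otimes_KT)=G'(T)$ is shorter.
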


\begin{proof}
	We first note that $E\otimes_k K$ is an integral domain because $k$ is relatively algebraically closed in $E$. 
	If $S$ is a $\partial$-simple ring with field of constants $k$, then $S\otimes_k K$ is $\partial$-simple with field of constants $K$ (\cite[Lemma 2.3]{DiVizioHardouinWibmer:DifferenceGaloisTheoryOfLinearDifferentialEquations}). 
Here $F^\partial=K$,	
since the field of constants of a $\partial$-simple ring agrees with the field of constants of its field of fractions (e.g., see the first part of the proof of \cite[Lemma 1.17(2)]{SingerPut:differential}).  As $R\otimes_EF$ is a localization of 
$R\otimes_E (E\otimes_k K) = 
R\otimes_kK$ which is $\partial$-simple, we see that $R\otimes_E F$ is also $\partial$-simple and has field of constants $K$. Thus $R'$ is a Picard-Vessiot ring over $F$.
	
For a $K$-algebra $T$ we have
	\begin{align*}
	G_{K}(T)&=\Aut^\partial(R\otimes_kT/E\otimes_k T)=\Aut^\partial((R\otimes_k K)\otimes_{K}T/(E\otimes_k K)\otimes_{K} T)=\\
	&=\Aut^\partial(R'\otimes_{K}T/F\otimes_{K} T)=G'(T)
	\end{align*}
and so $G'=G_{K}$.
\end{proof}

\begin{defi}
	Let $E$ be a differential field, let $K$ be a field extension of $k=E^\partial$,
and let $F$ be the fraction field of $E\otimes_k K$. If $(\alpha\colon G\twoheadrightarrow H,\ R)$ is a differential embedding problem over $E$, then $(\alpha_{K}\colon G_{K}\twoheadrightarrow H_{K},\ R\otimes_E F)$ is a differential embedding problem over $F$, by Lemma \ref{lemma: extend constants for PV ring}. Such an embedding problem is called an \emph{$(F/E)$-differential embedding problem}. \end{defi}

In the above definition, if the differential embedding problem $(\alpha\colon G\twoheadrightarrow H,\ R)$ is split (resp.\ admissible), then so is $(\alpha_{K}\colon G_{K}\twoheadrightarrow H_{K},\ R\otimes_E F)$.  We then refer to the latter embedding problem as an \emph{$(F/E)$-split} (resp.\ \emph{$(F/E)$-admissible}) differential embedding problem.  If $(\alpha\colon G\twoheadrightarrow H,\ R)$ has both properties, we call 
$(\alpha_{K}\colon G_{K}\twoheadrightarrow H_{K},\ R\otimes_E F)$ an \emph{$(F/E)$-split admissible} differential embedding problem.

The proof of the next theorem is modeled on the proof of \cite[Theorem~4.2]{BachmayrHarbaterHartmann:DifferentialEmbeddingProblemsOverLaurentSeriesFields}, replacing certain ingredients there by generalizations to the case of proalgebraic groups respectively not necessarily finitely generated Picard-Vessiot extensions. However, note that since we work over an algebraically closed base field $k$ (whereas \cite{BachmayrHarbaterHartmann:DifferentialEmbeddingProblemsOverLaurentSeriesFields} considered arbitrary fields $k$), some parts of the proof simplify significantly. 
\begin{theo} \label{theo: solve F/E DEB}
	Let $k$ be an algebraically closed field of characteristic zero, and let $K=k((t))$. Then every $(K(x)/k(x))$-split admissible differential embedding problem has a solution.
\end{theo}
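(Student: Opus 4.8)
The plan is to reduce the given $(K(x)/k(x))$-split admissible differential embedding problem to the patching setup of Theorem~\ref{theo: main patching}, applied to a suitable differential diamond with $F = K(x) = k((t))(x)$. Write the embedding problem as $(N\rtimes H, R')$ where $R' = R\otimes_{k(x)} F$ for a Picard-Vessiot ring $R$ over $E = k(x)$ with differential Galois group $H$, and $N$ is the algebraic kernel; by $(F/E)$-admissibility we have $\rank(R) = \rank(R') < |k| = |K|$ (note $|k((t))| = |k|$ since $k$ is uncountable, or more carefully one should track that $\rank(R') < |F|$ is what is needed). The two hypotheses of Theorem~\ref{theo: main patching} that must be arranged are: (i) a differential diamond $(F, F_1, F_2, F_0)$ with the factorization property such that $R' \subseteq F_1$ as a differential $F$-algebra, and (ii) a Picard-Vessiot ring $R_1/F_1$ with differential Galois group $N_{F_1^\partial}$ that embeds into $F_0$.

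First I would set up the diamond. Following the model of \cite[Theorem~4.2]{BachmayrHarbaterHartmann:DifferentialEmbeddingProblemsOverLaurentSeriesFields}, the natural choice uses the $t$-adic structure: take $F_1$, $F_2$ to be fields of rational functions in $x$ over appropriate complete subfields (e.g.\ one adapted to a point $x=0$-type place and one to $x=\infty$), with $F_0$ an iterated Laurent series field $k((x))((t))$ or similar, chosen so that $F_1 \cap F_2 = F$ and so that the factorization property holds — the latter coming from the patching machinery over complete discretely valued fields (Harbater–Hartmann–Krashen style), which is available precisely because $K = k((t))$ is complete. The derivation $\partial = d/dx$ extends compatibly to all four fields since $t$ is a constant.

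Next I would embed $R'$ into $F_1$. Since $R$ is a Picard-Vessiot ring over $k(x)$ of rank $< |k|$, Lemma~\ref{lemma: embed PV ring} gives $a \in k$ with $R \hookrightarrow k((x-a))$ as a differential $k(x)$-algebra; extending constants from $k$ to $K$ via Lemma~\ref{lemma: extend constants for PV ring} and choosing $F_1$ to contain (a copy of) $K((x-a))$, one gets $R' = R\otimes_{k(x)} F \hookrightarrow F_1$. For the kernel part (ii), $N$ is an algebraic group over $k$; the inverse differential Galois problem is solvable over fields like $K(x)$ (indeed over $\ell(x)$ for $\ell$ algebraically closed, and one works over $F_1$ or its constant-field analogue), so there is a Picard-Vessiot ring $R_1/F_1$ with Galois group $N$ over the constants of $F_1$, and by the same Lemma~\ref{lemma: embed PV ring}-type regular-point argument it embeds into $F_0$. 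Then Theorem~\ref{theo: main patching} applies and produces a solution to $(N\rtimes H, R')$, which is exactly a solution to the $(K(x)/k(x))$-split admissible differential embedding problem.

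\textbf{Main obstacle.} The routine-looking but genuinely delicate step is arranging the differential diamond with the factorization property \emph{together with} the two embedding conditions simultaneously: one needs $F_1$ large enough to contain the (infinitely generated, since $R'$ need not be of finite type) Picard-Vessiot ring $R'$ realized at a point $x=a$, while keeping $F_1 \cap F_2 = F$ and preserving the factorization property, and one must similarly fit $R_1$ inside $F_0$. This is where the cardinality hypothesis $\rank(R) < |k|$ is used crucially (to find a good point $a$ and to control the transcendence/cardinality so everything fits into the chosen Laurent series fields), and it is the reason the argument works over uncountable $k$ but not in general. Verifying that the derivation extends compatibly and that the torsors assemled in Theorem~\ref{theo: main patching} genuinely have constant field $K$ (so that the patched object is a Picard-Vessiot ring, not merely a differential torsor) is the technical heart, but it is handled by the cited results once the diamond is in place.
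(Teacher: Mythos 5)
Your overall strategy is the paper's: pick a point $a$ via Lemma~\ref{lemma: embed PV ring}, build a differential diamond by patching over $\mathbb{P}^1_{k[[t]]}$, realize the kernel $N$ over $F_1$ inside $F_0$, and apply Theorem~\ref{theo: main patching}. But the specific choice of $F_1$ you propose does not work, and this is exactly the step you flag as the delicate one. You ask for $F_1$ to contain a copy of $K((x-a))=k((t))((x-a))$ while taking $F_0$ to be an iterated Laurent series field such as $k((x-a))((t))$; however $k((t))((x-a))$ is not contained in $k((x-a))((t))$ (an element like $\sum_{n\ge 0} t^{-n}(x-a)^n$ has unbounded negative $t$-order), so no diamond of that shape exists, and the factorization property from \cite{HarbaterHartmann:PatchingOverFields} is in any case only established for the specific patching fields. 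The fix is that $F_1$ need not contain $K((x-a))$ at all: the paper takes $F_1=F_P=k((x-a,t)):=\Frac(k[[x-a,t]])$, $F_2=F_U=\Frac(k[(x-a)^{-1}][[t]])$ and $F_0=F_P^\circ=k((x-a))((t))$, the standard diamond of \cite[Theorem~5.9]{HarbaterHartmann:PatchingOverFields}. Since $R'=R_0\otimes_{k(x)}F$ is generated over $F=k((t))(x-a)$ by $R_0$, and both $R_0\subseteq k((x-a))$ and $F$ embed into $k((x-a,t))$, one gets a canonical differential $F$-algebra map $R'\to F_P$, injective because $R'$ is differentially simple. So the much smaller field $F_P$ suffices, and the only cardinality input is $\rank(R_0)<|k|$ for Lemma~\ref{lemma: embed PV ring}; in particular this theorem does not require $k$ uncountable, contrary to your closing remark --- uncountability is needed only later, in Proposition~\ref{prop: descent for embedding problems}.

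A second, smaller gap: for the hypothesis of Theorem~\ref{theo: main patching} concerning the kernel, you invoke solvability of the inverse problem ``over fields like $K(x)$,'' but $F_1=k((x-a,t))$ is a two-variable Laurent series field, not a rational function field over algebraically closed constants. The statement actually needed --- a Picard--Vessiot ring $R_1/F_P$ with differential Galois group $N_{F_P^{\,\partial}}$ contained in $F_P^\circ$ as a differential $F_P$-subalgebra --- is precisely \cite[Theorem~3.3]{BachmayrHarbaterHartmann:DifferentialEmbeddingProblemsOverLaurentSeriesFields}, which the paper cites directly. With these two ingredients corrected, your argument coincides with the paper's proof.
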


\begin{proof} Let $(N\rtimes H, R)$ be a $(K(x)/k(x))$-split admissible differential embedding problem. 
	Thus $R$ is a Picard-Vessiot ring over $F:=K(x)$ with differential Galois group $H$.  Moreover,
	there exist proalgebraic groups $N_0$ and $H_0$ over $k$ with $N_0$ of finite type such that $N$ is the base change of $N_0$ and $H$ is the base change of $H_0$ from $k$ to $K$, and there exists a Picard-Vessiot ring $R_0$ over $E:=k(x)$ with differential Galois group $H_0$ and $\rank(R_0)<|k(x)|=|k|$ such that $R\cong R_0\otimes_{k(x)}K(x)$ as differential $K(x)$-algebras. 
	
	By Lemma~\ref{lemma: embed PV ring}, there exists an $a\in k$ such that $R_0$ embeds into $k((x-a))$ as a differential $k(x)$-algebra, 
	where $k((x-a))$ is equipped with the derivation $\frac{\del}{\del(x-a)}$. 
	We use the method of patching over fields (see \cite{HarbaterHartmann:PatchingOverFields}) over the $x$-line $\mathbb P^1_{k[[t]]}$. Let $P$ denote the point $(x-a,t)$ on the closed fibre $\mathbb P^1_{k}$ of $\mathbb P^1_{k[[t]]}$, and set $U=\mathbb P^1_{k}\smallsetminus\{P\}$.  Then by \cite[Theorem~5.9]{HarbaterHartmann:PatchingOverFields} and the discussion immediately after the proof of that theorem, we have fields 
	\begin{eqnarray*}
		F_U&=&\Frac(k[(x-a)^{-1}][[t]]) \\
		F_P&=&k((x-a,t)):=\operatorname{Frac}(k[[x-a,t]]) \\
		F_P^\circ&=&k((x-a))((t))
	\end{eqnarray*}
	such that the quadruple $(F,F_P,F_U,F_P^\circ)$ is a diamond with the factorization property. Note that  the derivation $\del=\del/\del x= \del/ \del(x-a)$ on $F$ extends to compatible derivations $\del/\del(x-a)$ on $F_U, F_P$ and $F_P^\circ$. Hence $(F,F_P,F_U,F_P^\circ)$ is in fact a differential diamond with the factorization property.
	
	To prove the theorem, it suffices to verify the hypotheses of Theorem~\ref{theo: main patching} for the fields $F_1=F_P$, $F_2=F_U$, $F_0=F_P^\circ$.    By \cite[Theorem~3.3]{BachmayrHarbaterHartmann:DifferentialEmbeddingProblemsOverLaurentSeriesFields}, there is
	a Picard-Vessiot ring $R_1/F_1$ with differential Galois group $N$ such that $R_1$ is a differential $F_1$-subalgebra of $F_0$.  Thus it remains to show that 
	the Picard-Vessiot ring $R$ over $F$ is a differential $F$-subalgebra of $F_1=F_P$.
	
	Recall that $R_0$ embeds into $k((x-a))$ as a differential $k(x)$-algebra; in particular, $R_0$ embeds into $k((x-a,t))$ as a differential $k(x)$-algebra, as does $F = k((t))(x) = k((t))(x-a)$.  So there is a canonical differential $F$-algebra homomorphism $R\cong R_0\otimes_{k(x)} k((t))(x)\to k((x-a,t))$; and since $R$ is a simple differential ring, this homomorphism is injective. Hence we may consider $R$ as a differential $F$-subalgebra of $k((x-a,t))=F_P$.	
\end{proof}

The following proposition shows that there is no harm in enlarging the constants when considering differential embedding problems over $k(x)$.

\begin{prop} \label{prop: descent for embedding problems}
	Let $k$ be an uncountable algebraically closed field of characteristic zero, let $(\alpha\colon G\twoheadrightarrow H,\ R)$ be an admissible differential embedding problem over $E:=k(x)$, and let $K/k$ be a field extension.
If the induced differential embedding problem $(\alpha_{K}\colon G_{K}\twoheadrightarrow H_{K},\ R\otimes_E F)$ over $F=K(x)$
is solvable, then so is $(\alpha\colon G\twoheadrightarrow H,\ R)$.
\end{prop}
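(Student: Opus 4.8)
The plan is to descend the solution $S$ of the embedding problem over $F=K(x)$ down to a solution over $E=k(x)$, using a specialization argument that exploits the uncountability of $k$. First I would reduce to the case that $K$ is finitely generated over $k$: a solution $S/F$ is, by construction, built from countably (in fact, $\rank(R)<|k|$, and since $k$ is uncountable, ${<}|k|$-many) differential equations with coefficients in $K(x)$ whose matrices involve only finitely many elements of $K$, so we may replace $K$ by the subfield $k(t_1,\dots,t_m,\dots)$ generated by these coefficients (of cardinality $<|k|$); and then, enlarging if necessary, assume $K$ has transcendence degree $<|k|$ over $k$. The core idea is then to view $S$, together with the defining data of the Picard-Vessiot ring $R$ and the isomorphism $\gal(S/F)\cong G_K$, as a finite (or $\rank$-sized) amount of algebro-geometric data defined over a finitely generated $k$-subalgebra $A\subseteq K$, giving a family over $\operatorname{Spec}(A)$ whose generic fibre is the given solution over $F$, and to specialize at a suitable $k$-point of $\operatorname{Spec}(A)$.

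The key steps, in order, would be: (1) Spread out: choose a finitely generated $k$-subalgebra $A\subseteq K$ over which $R\otimes_E F$, the Picard-Vessiot ring $S$, all the fundamental solution matrices $Y_i$ and their inverses, the differential structure, the torsor/co-action maps, and the identification $\gal(S/F)\cong G_K$ compatible with $\alpha$, are all defined; so we obtain a differential ring $\mathcal S$ over $A(x)=A\otimes_k E$, flat and finitely presented over $A(x)$ in the finite-type case, or an increasing union of such in general. (2) Openness of good locus: argue that the conditions "$\mathcal S\otimes_A \kappa(\mathfrak p)$ is $\partial$-simple with constants $\kappa(\mathfrak p)$, hence a Picard-Vessiot ring with group $G_{\kappa(\mathfrak p)}$" and "it contains $R\otimes_E \kappa(\mathfrak p)(x)$ compatibly with $\alpha$" hold on a dense open subset of $\operatorname{Spec}(A)$; this uses that $\partial$-simplicity, the torsor property (Theorem-style characterization via the isomorphism $R\otimes_K K[G]\xrightarrow{\sim} R\otimes_F R$), and surjectivity of the restriction map are all open/constructible conditions, plus generic flatness. (3) Find a $k$-point: since $k$ is uncountable and algebraically closed, a nonempty open subset of the finite-type $k$-scheme $\operatorname{Spec}(A)$ has a $k$-rational point $\mathfrak m$ — here is exactly where uncountability of $k$ is used, because in the $\rank(R)$-many-equations situation one must avoid a union of $<|k|$ proper closed subsets (one per equation), and $|k|$ uncountable, $>\rank(R)$, makes the union still a proper subset. (4) Specialize: the fibre $\mathcal S\otimes_A k$ is then a Picard-Vessiot ring over $k(x)=E$ solving $(\alpha\colon G\twoheadrightarrow H,\ R)$, after identifying the fibre of $R\otimes_E F$ over $\mathfrak m$ with $R$ itself (possible because the original $R$ is already defined over $E$, so its spread-out family is constant and its fibre at any $k$-point is $R$).

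The main obstacle I anticipate is Step (2) combined with handling the non-finite-type case cleanly: one must show that all the relevant good properties — $\partial$-simplicity of the fibre, the fibre having the correct (possibly infinite-rank) differential Galois group, and the compatibility of the embedding $R\hookrightarrow \mathcal S$ with $\alpha$ on the fibre — are simultaneously satisfied on a dense open, and in the infinite case this is a countable-or-smaller intersection of dense opens, each coming from one equation or from one coalgebraic quotient $G/U$ as in Lemma~\ref{lemma: reduce split to finite type}. Writing $S=\varinjlim S_U$ with $S_U$ of finite type, one spreads out each $S_U$ over some finitely generated $A_U\subseteq K$, gets a good open $V_U\subseteq\operatorname{Spec}(A_U)$, and needs a single $k$-point lying over all the $V_U$ compatibly; this is arranged by taking $A$ to be the (finitely generated, since $\trdeg_k K<|k|$ forces only countably many generators to actually occur, but really one takes the subring generated by all data) $k$-subalgebra generated by everything, noting each $V_U$ pulls back to a dense open of $\operatorname{Spec}(A)$, and the complement of $\bigcap_U V_U$ is then a union of $\le\rank(R)<|k|$ proper closed subsets of the irreducible $k$-scheme $\operatorname{Spec}(A)$, hence not all of $\operatorname{Spec}(A)(k)$. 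The verification that $\partial$-simplicity is an open condition on the base — essentially that a differential ideal in the generic fibre spreads out and that a differentially simple generic fibre stays so generically — is the technical heart, and is where I would lean on the behaviour of $\partial$-simple rings under base extension (as in the cited \cite[Lemma 2.3]{DiVizioHardouinWibmer:DifferenceGaloisTheoryOfLinearDifferentialEquations} and \cite[Lemma 1.17]{SingerPut:differential}) together with Noetherian generic-flatness arguments.
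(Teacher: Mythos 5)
Your strategy (spread the solution out over a small $k$-subalgebra $A\subseteq K$ and specialize at a well-chosen $k$-point of $\operatorname{Spec}(A)$) is genuinely different from the paper's, and it has a gap precisely at the step you flag as the technical heart. The assertion in your Step (2) that $\partial$-simplicity of the fibre, preservation of the differential Galois group, and the torsor identification ``hold on a dense open subset of $\operatorname{Spec}(A)$'' is false. Consider the single equation $\partial(y)=\frac{t}{x}y$ over $k(t)(x)$, spread out over $A=k[t]$: the generic fibre is a Picard-Vessiot ring with group $\Gm$, but for every $t_0\in\mathbb{Q}$ the specialized ring $k(x)[y,y^{-1}]$ with $\partial(y)=\frac{t_0}{x}y$ has a nontrivial differential ideal (e.g.\ $(y-x)$ at $t_0=1$, $(y-1)$ at $t_0=0$, $(y^2-x)$ at $t_0=1/2$) and the Galois group collapses. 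The bad locus $\mathbb{Q}\subseteq\mathbb{A}^1_k$ is Zariski-dense, so the good locus is not open. At best it is the complement of a union of infinitely many proper closed subsets, and to make your cardinality bookkeeping in Step (3) work you would need to prove that the locus where simplicity fails or the group drops is contained in a union of fewer than $|k|$ proper closed subsets. That is a substantive theorem about specialization of differential Galois groups which neither the paper nor its cited references supply, so as written the argument does not close.

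The paper sidesteps specialization entirely. It first descends the embedding problem to $E_0=k_0(x)$ for an algebraically closed $k_0\subseteq k$ with $|k_0|<|k|$, and descends the given solution over $K(x)$ to $E_1=k_1(x)$ for an algebraically closed $k_1\subseteq K$ containing $k_0$ with $|k_1|<|k|$; these two steps use only that the defining families involve fewer than $|k|$ coefficients, and they match the reductions in your Step (1). The key move is then to use $\trdeg(k/k_0)=|k|>|k_1|$ to choose an embedding of the \emph{field} $k_1$ into $k$ over $k_0$, and to base change the solution along it. Since base change of Picard-Vessiot rings along an extension of algebraically closed fields of constants preserves $\partial$-simplicity, the constants, and the Galois group exactly (Lemma \ref{lemma: extend constants for PV ring}), nothing can degenerate: the solution is transported ``generically'' rather than specialized at a closed point. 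In effect, where you look for a $k$-point of $\operatorname{Spec}(A)$, the paper maps (an algebraically closed field containing) $\Frac(A)$ itself into $k$; replacing your Step (2)--(4) by that embedding argument is the repair.
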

	
\begin{proof}	
	After replacing $K$ by its algebraic closure (and using Lemma \ref{lemma: extend constants for PV ring}), we may assume that $K$ is algebraically closed.
	The proof proceeds in three steps. 
	We first show that the embedding problem $(\alpha\colon G\twoheadrightarrow H,\ R)$ descends to an embedding problem $(\alpha_0\colon G_0\twoheadrightarrow H_0,\ R_0)$ over $E_0=k_0(x)$, where $k_0\subseteq k$ is an algebraically closed field with $|k_0|<|k|$.
	Then, based on the assumption of the proposition, we show that for some algebraically closed field extension $k_1$ of $k_0$ inside $K$ with $|k_1|<|k|$, the embedding problem $(\alpha_1\colon G_1\twoheadrightarrow H_1,\ R_1)=((\alpha_0)_{k_1}\colon (G_0)_{k_1}\twoheadrightarrow (H_0)_{k_1},\ R_0\otimes_{E_0}E_1)$ over $E_1:=k_1(x)$ has a solution. Finally, we embed $k_1/k_0$ into $k$ and observe that the embedding problem  $((\alpha_1)_k\colon (G_1)_k\twoheadrightarrow (H_1)_k,\ R_1\otimes_{E_1} E)$ is the original embedding problem.

	Since $\rank(R)<|E|=|k|$, we know that $R$ is the Picard-Vessiot ring for a family of linear differential equations $(\partial(y)=A_{i}y,\ A_i\in E^{n_i\times n_i})_{i\in I}$ where $|I|=\rank(R)<|k|$. As $k$ is uncountable, there exists an algebraically closed subfield field $k_0$ of $k$ with $|k_0|<|k|$ such that all entries of all $A_i$'s lie in $E_0=k_0(x)\subseteq k(x)=E$. Let $R_0/E_0$ be the Picard-Vessiot ring for the family 
	\begin{equation} \label{eq: family for R}
	(\partial(y)=A_{i}y,\ A_i\in {E_0}^{n_i\times n_i})_{i\in I}
	\end{equation}
	 over $E_0$. Then $R$ and $R_0\otimes_{E_0}E$ both are Picard-Vessiot rings for the same family of equations, hence $R=R_0\otimes_{E_0}E$ and $H=(H_0)_{k}$ by Lemma \ref{lemma: extend constants for PV ring}, where $H_0:=\gal(R_0/E_0)$. 
	 Since $$\rank(G)=\rank(H)<|k|$$ by \cite[Lemma 3.6]{Wibmer:FreeProalgebraicGroups} (and admissibility), the coordinate ring of $k[G]$ can be generated by less than $|k|$ elements (cf. \cite[Prop. 3.1]{Wibmer:FreeProalgebraicGroups}). Thus, after enlarging $k_0$ if necessary, we can assume that $G=(G_0)_k$ for some proalgebraic group $G_0$ over $k_0$ and that $\alpha=({\alpha_0})_k$ for some $\alpha_0\colon G_0\twoheadrightarrow H_0$.

	By assumption, there exists a Picard-Vessiot ring $S'/F$ containing $R'$ and an isomorphism $\psi'\colon G_K\to\gal(S'/F)$ such that  	
	\begin{equation} \label{eq:diagram commutes over K}
	\xymatrix{
		\gal(S'/F) \ar@{->>}[r] & \gal(R'/F) \\
		G_K \ar^-{\psi'}[u] \ar@{->>}^{\alpha_K}[r] & H_K	\ar[u]
	}
	\end{equation}
	commutes.

	As $\rank(S')=\rank(G_K)=\rank(G)<|k|$, there exist a family of linear differential equations 
	\begin{equation} \label{eq: family for S'}
(\partial(y)=B_{j}y,\ B_j\in F^{n_j\times n_j})_{j\in J}
	\end{equation} where $|J|<|k|$, such that $S'/F$ is the Picard-Vessiot ring for this family. As $R_0\otimes_{E_0}F=R'\subseteq S'$ we may assume without loss of generality that the family (\ref{eq: family for R}) is a subfamily of 
	$(\partial(y)=B_{j}y,\ B_j\in F^{n_j\times n_j})_{j\in J}$.
	
	There exists an algebraically closed field extension $k_1/k_0$ inside $K$ of cardinality less than $|k|$ such that all entries of all $B_j$'s lie in $E_1=k_1(x)\subseteq K(x)=F$. Let $S_1/E_1$ be the Picard-Vessiot ring for the family $(\partial(y)=B_{j}y,\ B_j\in E_1^{n_j\times n_j})_{j\in J}$ over $E_1$. Then $S'=S_1\otimes_{E_1}F$ and with $G_1:=(G_0)_{k_1}$ we obtain that
	$$\psi'\colon (G_1)_{K}=G_K\cong \gal(S'/F)=\gal(S_1/E_1)_{K}$$
	 is an isomorphism of proalgebraic groups over $K$. As $\rank(G_1)=\rank(G)<|k|$, after enlarging $k_1$ if necessary, we can assume that, $\psi'=(\psi_1)_{K}$
	for some isomorphism $\psi_1\colon G_1\to \gal(S_1/E_1)$ of proalgebraic groups over $k_1$. Set $R_1=R_0\otimes_{E_0} E_1$,  $H_1=(H_0)_{k_1}$ and $\alpha_1=(\alpha_0)_{k_1}$. We have $R_0\subseteq R'\subseteq S'=S_1\otimes_{E_1} F$. As the family (\ref{eq: family for S'}) contains the family (\ref{eq: family for R}), we see that $R_1=R_0\otimes_{E_0}E_1\subseteq S_1$. 
	The diagram	 
	\begin{equation} \label{eq:diagram commutes over k1}
	\xymatrix{
		\gal(S_1/E_1) \ar@{->>}[r] & \gal(R_1/E_1) \\
		G_1 \ar^{\psi_1}[u] \ar@{->>}^{\alpha_1}[r] & H_1	\ar[u]
	}
	\end{equation}
	of proalgebraic groups over $k_1$ becomes diagram (\ref{eq:diagram commutes over K}) after base extension form $k_1$ to $K$. As diagram (\ref{eq:diagram commutes over K}) commutes, diagram (\ref{eq:diagram commutes over k1}) also commutes.

	Since $k$ is uncountable and $|k_0|<|k|$, we see that $\trdeg(k/k_0)=|k|>|k_1|$. Thus there exists an embedding of $k_1$ into $k$ over $k_0$. Now, if we consider $k_1$ as a subfield of $k$ and $E_1$ as a subfield of $E$ via this embedding, then $S=S_1\otimes_{E_1}E$ is a Picard-Vessiot ring over $E$ that contains $R=R_1\otimes_{E_1}E$. Moreover, the diagram 
	$$
	\xymatrix{
		\gal(S/E) \ar@{->>}[r] & \gal(R/E) \\
		G \ar^{(\psi_1)_k}[u] \ar@{->>}^{\alpha}[r] & H	\ar[u]
	}
	$$
	of proalgebraic groups over $k$ commutes, because it is the base change to $k$ of the commutative diagram (\ref{eq:diagram commutes over k1}).
	\end{proof}

We are finally prepared to prove Matzat's conjecture. Our proof works for any algebraically closed field of characteristic zero of infinite transcendence degree over $\mathbb{Q}$ in place of $\C$.

\begin{theo}[Matzat's conjecture for infinite transcendence degree]\label{result}
	Let $k$ be an algebraically closed field of characteristic zero of infinite transcendence degree over $\mathbb{Q}$. Then the absolute differential Galois group of $k(x)$ is the free proalgebraic group on a set of cardinality $|k|$.
\end{theo}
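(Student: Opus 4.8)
The plan is to deduce Matzat's conjecture for $k(x)$ from the criterion in Proposition~\ref{prop: criterion for abs differential group to be free} applied with $F=k(x)$, for which $F^\partial=k$ is algebraically closed as required. By that proposition, it suffices to verify two things: (\ref{free i}) every differential embedding problem over $k(x)$ of finite type has a solution, and (\ref{free ii}) every split admissible differential embedding problem over $k(x)$ has a solution. The first condition is already available: it is exactly \cite[Cor.~4.6]{BachmayrHartmannHarbaterPopLarge} (for $\C$, but the argument there works for arbitrary algebraically closed fields of characteristic zero, cf.\ the discussion at the start of Section~\ref{sec: Solving differential embedding problems}). So the whole burden of the proof is to establish~(\ref{free ii}).

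To prove~(\ref{free ii}), I would first observe that $k$ has infinite transcendence degree over $\mathbb{Q}$, so $k$ is in particular uncountable; this is what makes the descent argument available. Take an arbitrary split admissible differential embedding problem $(\alpha\colon G\twoheadrightarrow H,\ R)$ over $E:=k(x)$. The strategy is to extend the constants, solve the enlarged problem over a Laurent series base, and then descend. Concretely, set $K=k((t))$ and form the $(K(x)/k(x))$-differential embedding problem $(\alpha_K\colon G_K\twoheadrightarrow H_K,\ R\otimes_E K(x))$ as in the definition preceding Theorem~\ref{theo: solve F/E DEB}; since the original problem is split and admissible, so is this one (split-ness and admissibility are preserved by the constant extension, as noted there, using that $\rank$ is unchanged by extension of an algebraically closed constant field via Lemma~\ref{lemma: extend constants for PV ring} together with \cite[Lemma~3.3]{BachmayrHarbaterHartmannWibmer:FreeDifferentialGaloisGroups} and \cite[Lemma~3.6]{Wibmer:FreeProalgebraicGroups}). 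By Theorem~\ref{theo: solve F/E DEB}, this $(K(x)/k(x))$-split admissible differential embedding problem has a solution.

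Now I would invoke Proposition~\ref{prop: descent for embedding problems}, whose hypotheses are met: $k$ is uncountable and algebraically closed of characteristic zero, $(\alpha\colon G\twoheadrightarrow H,\ R)$ is admissible over $k(x)$, and $K/k$ is a field extension whose induced embedding problem we have just shown to be solvable. The conclusion of that proposition is precisely that $(\alpha\colon G\twoheadrightarrow H,\ R)$ itself is solvable over $k(x)$. As the original split admissible differential embedding problem was arbitrary, condition~(\ref{free ii}) holds. Combining~(\ref{free i}) and~(\ref{free ii}) with Proposition~\ref{prop: criterion for abs differential group to be free}, the absolute differential Galois group of $k(x)$ is the free proalgebraic group on a set of cardinality $|k(x)|=|k|$, which is the assertion of the theorem.

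I expect the only real subtlety in assembling this proof to be bookkeeping: making sure that the split admissible problem over $k(x)$, when base-changed to $K(x)$, is genuinely presented in the $(K(x)/k(x))$-form that Theorem~\ref{theo: solve F/E DEB} requires (i.e.\ that $G_K$, $H_K$ and $R\otimes_E K(x)$ all descend to $k$, which is automatic here since they come from $k$ by construction), and that the notion of ``solvable'' matches between Proposition~\ref{prop: descent for embedding problems} and the output of Theorem~\ref{theo: solve F/E DEB}. The genuinely hard mathematical content --- the patching construction solving split admissible problems over the Laurent series base (Theorem~\ref{theo: main patching} and Theorem~\ref{theo: solve F/E DEB}) and the specialization/descent argument (Proposition~\ref{prop: descent for embedding problems}) --- has already been carried out in the preceding sections, so the proof of the theorem proper is essentially a two-line deduction from Proposition~\ref{prop: criterion for abs differential group to be free}.
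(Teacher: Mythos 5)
Your overall route---verify conditions (i) and (ii) of Proposition~\ref{prop: criterion for abs differential group to be free}, obtain (i) from \cite[Cor.~4.6]{BachmayrHartmannHarbaterPopLarge}, and obtain (ii) by extending the constants to $K=k((t))$, solving the induced problem over $K(x)$ via Theorem~\ref{theo: solve F/E DEB}, and descending via Proposition~\ref{prop: descent for embedding problems}---is exactly the paper's argument. However, there is one genuine gap: you assert that infinite transcendence degree over $\mathbb{Q}$ forces $k$ to be uncountable. That is false; the algebraic closure of $\mathbb{Q}(t_1,t_2,\dots)$ is a countable algebraically closed field of characteristic zero of infinite transcendence degree over $\mathbb{Q}$. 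This is not a cosmetic issue, because Proposition~\ref{prop: descent for embedding problems} has uncountability of $k$ as a hypothesis and its proof genuinely needs it: one must choose an algebraically closed subfield $k_0\subseteq k$ with $|k_0|<|k|$ and later use $\trdeg(k/k_0)=|k|>|k_1|$ to embed $k_1$ into $k$ over $k_0$, and no infinite proper-cardinality subfield exists when $k$ is countable. So your argument, as written, does not cover all fields allowed by the hypothesis of the theorem.

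The paper closes this case by a separate appeal to prior work: for countable algebraically closed $k$ of infinite transcendence degree, Matzat's conjecture was already established in \cite[Theorem~3.10]{BachmayrHarbaterHartmannWibmer:FreeDifferentialGaloisGroups} (by a characterization of freeness that is simpler in the countable setting), and the proof of Theorem~\ref{result} begins by citing that result and then assuming $k$ uncountable. With that case split added at the outset, your deduction is correct and coincides with the paper's.
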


\begin{proof} 
If $k$ is countable, this was shown in \cite[Theorem~3.10]{BachmayrHarbaterHartmannWibmer:FreeDifferentialGaloisGroups}. So we may henceforth assume that $k$ is uncountable. Define $E=k(x)$. By Proposition \ref{prop: criterion for abs differential group to be free}, it suffices to show that differential embedding problems of type~\ref{free i} and~\ref{free ii} as in that proposition are solvable. As differential embeddings of type~\ref{free i} (i.e., those of finite type) were solved in \cite[Cor. 4.6]{BachmayrHartmannHarbaterPopLarge}, it suffices to show that those of type~\ref{free ii} are solvable. So let $(\alpha\colon G\twoheadrightarrow H,\ R)$ be a split admissible differential embedding problem over $E=k(x)$. Set $K=k((t))$ and $F=K(x)$. By Theorem \ref{theo: solve F/E DEB} the induced differential embedding problem $(\alpha_{K}\colon G_{K}\twoheadrightarrow H_{K},\ R\otimes_E F)$ is solvable. So it follows from Proposition \ref{prop: descent for embedding problems} that $(\alpha\colon G\twoheadrightarrow H,\ R)$ has a solution.
\end{proof}

In the proof of Theorem \ref{result}, it was sufficient to consider only certain types of admissible differential embedding problems, but by \cite[Theorem 3.7]{BachmayrHarbaterHartmannWibmer:FreeDifferentialGaloisGroups} the result of Theorem \ref{result} then actually  implies:

\begin{cor} \label{cor: cor to main}
Let $k$ be an algebraically closed field of infinite transcendence degree over $\mathbb{Q}$. Then every differential embedding problem $(G\twoheadrightarrow H, R)$ over $k(x)$ with $\rank(G)\leq |k|$ and $\rank(R)<|k|$ has a solution. 
\end{cor}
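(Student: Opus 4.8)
The plan is to deduce this corollary from Theorem~\ref{result} together with the equivalence established in \cite[Theorem~3.7]{BachmayrHarbaterHartmannWibmer:FreeDifferentialGaloisGroups}. First I would invoke Theorem~\ref{result}: since $k$ has infinite transcendence degree over $\mathbb{Q}$, the absolute differential Galois group $\Gamma=\gal(\widetilde{E}/E)$ of $E=k(x)$ is the free proalgebraic group on a set of cardinality $|k|=|E|$. By \cite[Theorem~3.7]{BachmayrHarbaterHartmannWibmer:FreeDifferentialGaloisGroups} this is equivalent to the statement that every \emph{admissible} differential embedding problem over $E$ has a solution, where ``admissible'' means precisely $\rank(R)<|E|$ and $\ker(\alpha)$ algebraic (Definition preceding Proposition~\ref{prop: criterion for abs differential group to be free}).

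The remaining point is that the hypotheses of the corollary, namely $\rank(G)\leq|k|$ and $\rank(R)<|k|$, force the given embedding problem $(\alpha\colon G\twoheadrightarrow H,\ R)$ to be admissible in this sense; i.e., one must check $\ker(\alpha)$ is algebraic. This is where I would argue via ranks. Since $\alpha\colon G\twoheadrightarrow H$ is an epimorphism, $\rank(H)\leq\rank(G)$ by \cite[Lemma~3.5]{Wibmer:FreeProalgebraicGroups} (a quotient has rank at most that of the group; more precisely a closed subgroup argument as used in Lemma~\ref{lem: domination}), and on the other hand $\rank(H)=\rank(R)<|k|$ because $H\cong\gal(R/F)$ and $\rank(L)=\rank(G)$ for a Picard-Vessiot extension (\cite[Lemma~3.3]{BachmayrHarbaterHartmannWibmer:FreeDifferentialGaloisGroups}). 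Hence $\rank(H)<|k|=\rank(\Gamma)$, so the embedding problem is admissible as an embedding problem for $\Gamma$ once we also know $\ker(\alpha)$ is algebraic. But wait --- the corollary does \emph{not} assume $\ker(\alpha)$ algebraic, only $\rank(G)\leq|k|$, so in fact the relevant equivalence to cite is the full strength of \cite[Theorem~3.7]{BachmayrHarbaterHartmannWibmer:FreeDifferentialGaloisGroups}: freeness of $\Gamma$ on a set of cardinality $|E|$ is equivalent to solvability of \emph{all} admissible differential embedding problems, and by the reduction in that reference (translating between embedding problems for $\Gamma$ and differential embedding problems over $E$, valid since $k$ is algebraically closed) solvability extends to any embedding problem $(G\twoheadrightarrow H, R)$ with $\rank(G)\leq\rank(\Gamma)$ and $\rank(R)<\rank(\Gamma)$, because the definition of ``saturated/free'' in \cite{Wibmer:FreeProalgebraicGroups} already handles embedding problems whose kernel need not be algebraic once the rank bounds hold.

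Concretely, the key steps in order are: (1) apply Theorem~\ref{result} to get that $\Gamma$ is free of rank $|k|$; (2) translate the given differential embedding problem $(\alpha\colon G\twoheadrightarrow H,\ R)$ into an embedding problem $\beta\colon\Gamma\twoheadrightarrow H$, $\alpha\colon G\twoheadrightarrow H$ for the proalgebraic group $\Gamma$, using the differential Galois correspondence for $\widetilde{E}/E$ (cf.\ \cite[Section~3.3]{BachmayrHarbaterHartmannWibmer:FreeDifferentialGaloisGroups}), which is legitimate since $K=k$ is algebraically closed; (3) verify the rank inequalities: $\rank(H)=\rank(R)<|k|=\rank(\Gamma)$ and $\rank(G)\leq|k|=\rank(\Gamma)$; (4) invoke the characterization of free proalgebraic groups (\cite[Theorem~3.24, Theorem~3.42]{Wibmer:FreeProalgebraicGroups}) to conclude this embedding problem for $\Gamma$ has a proper solution $\f\colon\Gamma\twoheadrightarrow G$; (5) translate that solution back, via the Galois correspondence again, into a solution $R\hookrightarrow S$ of the original differential embedding problem. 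I expect the main obstacle to be a bookkeeping one: making sure the exact hypothesis in \cite[Theorem~3.7]{BachmayrHarbaterHartmannWibmer:FreeDifferentialGaloisGroups} (respectively in \cite{Wibmer:FreeProalgebraicGroups}) that is being quoted really does allow the kernel $\ker(\alpha)$ to be an arbitrary coalgebraic (rather than algebraic) subgroup, so that the corollary's weaker hypothesis $\rank(G)\leq|k|$ genuinely suffices; if that reference only gives the algebraic-kernel case, one would instead write $G=\varprojlim_i G_i$ as a limit of algebraic quotients, solve each finite-type piece compatibly (which is exactly the admissible case already covered), and pass to the limit using Lemma~\ref{lem: direct system of PV rings} --- but this is routine given the machinery already developed.
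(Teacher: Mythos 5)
Your proposal matches the paper's own (essentially one-line) justification: the corollary is deduced from Theorem~\ref{result} together with \cite[Theorem~3.7]{BachmayrHarbaterHartmannWibmer:FreeDifferentialGaloisGroups}, whose equivalent characterizations of freeness (resting on \cite[Theorems 3.24, 3.42]{Wibmer:FreeProalgebraicGroups}) cover embedding problems with the rank bounds $\rank(G)\leq\rank(\Gamma)$ and $\rank(H)<\rank(\Gamma)$ without requiring the kernel to be algebraic. Your attention to the algebraic-kernel subtlety, and the fallback reduction via $G=\varprojlim G_i$, is exactly the content of those cited equivalences, so the argument is correct and takes the same route as the paper.
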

In particular, every admissible differential embedding problem over $k(x)$ has a solution. More is true:

\begin{cor}
Let $k$ be an algebraically closed field of infinite transcendence degree over $\mathbb{Q}$. Then every non-trivial differential embedding problem $(G\twoheadrightarrow H, R)$ of finite type over $k(x)$ has $|k|$ independent solutions, i.e., there exist $|k|$ solutions $S_i$ (inside the Picard-Vessiot extension of the family of all linear differential equations over $k(x)$) such that the fraction fields of the $S_i$'s are linearly independent over the field of fractions of $R$. 
\end{cor}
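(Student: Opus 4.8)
The plan is to derive this from Theorem~\ref{result} together with the embedding-problem characterizations of free (equivalently, saturated of rank $\geq|K|$) proalgebraic groups. Write $F=k(x)$, let $\widetilde F$ be the Picard-Vessiot extension of the family of all linear differential equations over $F$, and let $\Gamma=\gal(\widetilde F/F)$ be the absolute differential Galois group of $F$. By Theorem~\ref{result}, $\Gamma$ is the free proalgebraic group on a set $X$ of cardinality $|k|$, hence in particular saturated, and every Picard-Vessiot ring over $F$ embeds into $\widetilde F$ since $k$ is algebraically closed. Fixing such an embedding for $R$, the differential Galois correspondence turns a non-trivial finite type differential embedding problem $(\alpha\colon G\twoheadrightarrow H,\ R)$ into a non-trivial algebraic embedding problem $(\alpha\colon G\twoheadrightarrow H,\ \beta\colon\Gamma\twoheadrightarrow H)$, where $\beta$ is the restriction map and $N:=\ker(\alpha)\neq 1$. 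Under this dictionary a solution $S\supseteq R$ corresponds to a proper solution $\varphi\colon\Gamma\twoheadrightarrow G$ (with $S$ the Picard-Vessiot ring associated with $\ker\varphi$), and a family of solutions $(S_i)_{i\in\I}$ has fraction fields that are linearly disjoint over $\Frac(R)$ exactly when, for every finite $S\subseteq\I$, the combined map $(\varphi_i)_{i\in S}\colon\Gamma\to G\times_H\cdots\times_H G$ ($|S|$ factors) is again a proper solution, this time of the fibre power embedding problem $(G\times_H\cdots\times_H G\twoheadrightarrow H,\ \beta)$.

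So it suffices to show that a non-trivial algebraic embedding problem for the free proalgebraic group $\Gamma=\Gamma_k(X)$ of infinite rank $|k|$ admits a family of $|k|$ independent proper solutions in the sense just described. Each fibre power embedding problem is admissible: its kernel is $N^{|S|}$, which is algebraic, while $\rank(H)<\rank(\Gamma)=|k|$ because $H$ is algebraic and $\Gamma$ is not; hence by saturatedness it has a proper solution. To assemble a single family $(\varphi_i)_{i\in\I}$ solving all of these simultaneously, one exploits the free structure as in \cite{Wibmer:FreeProalgebraicGroups} and \cite{BachmayrHarbaterHartmannWibmer:FreeDifferentialGaloisGroups}: since $H$ is algebraic, $\ker(\beta)$ is coalgebraic, so all but finitely many elements of $X$ lie in $(\ker\beta)(\kb)$; removing those finitely many exceptional generators and partitioning the remaining $|k|$ generators into $|k|$ blocks, each of cardinality $|k|$, one builds the $\varphi_i$ by a transfinite recursion in which $\varphi_i$ is constructed along the $i$-th block while the partial family is kept independent by invoking solvability of the relevant fibre power problems. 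The mutual disjointness of the blocks forces independence of the resulting family. In the differential language this is exactly the implication ``Matzat's conjecture $\Rightarrow$ every non-trivial finite type differential embedding problem over $F$ has $|k|$ independent solutions'' recalled for $\C$ in the introduction and established in \cite{BachmayrHarbaterHartmannWibmer:FreeDifferentialGaloisGroups}; that argument is purely group-theoretic, so it applies once Theorem~\ref{result} provides the freeness of $\Gamma$ for the fields $k$ under consideration.

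\textbf{Main obstacle.} The delicate point is precisely this last step: passing from solvability of \emph{each} finite fibre power embedding problem to the existence of one coherent independent family $(\varphi_i)_{i\in\I}$ of size $|k|$, since an arbitrary choice of solutions depends on the finite set $S$ and need not be compatible. This is the combinatorial heart of the freeness criterion, handled by the block decomposition of a free generating set; the only genuinely new ingredient relative to \cite{BachmayrHarbaterHartmannWibmer:FreeDifferentialGaloisGroups} is Theorem~\ref{result} itself. Transporting the independent epimorphisms $\varphi_i$ back through the differential Galois correspondence then yields the desired $|k|$ solutions $S_i\subseteq\widetilde F$ with linearly disjoint fraction fields over $\Frac(R)$, completing the proof.
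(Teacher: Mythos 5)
Your proposal is correct and follows essentially the same route as the paper, which gives no separate argument for this corollary but deduces it (as you do) by combining Theorem~\ref{result} with the equivalence from \cite{BachmayrHarbaterHartmannWibmer:FreeDifferentialGaloisGroups} between freeness of the absolute differential Galois group and the existence of $|k|$ independent solutions to every non-trivial finite type differential embedding problem. Your sketch of the group-theoretic mechanism behind that equivalence (fibre-power embedding problems being admissible, plus the block decomposition of a free generating set) accurately reflects the argument in loc.~cit., and you correctly identify Theorem~\ref{result} as the only new input.
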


\begin{rem} \label{rem: Matzat implies Douady}
Matzat's conjecture implies Douady's theorem. More generally, if $F$ is a differential field with $k=F^\del$ algebraically closed such that the absolute differential Galois group of $F$ is the free proalgebraic group on a set of cardinality $|F|$, then the absolute Galois group of $F$ is the free profinite group on a set of cardinality $|F|$.
\end{rem}
\begin{proof} For a proalgebraic group $G$, let $\pi_0(G)$ denote the maximal pro-\'{e}tale quotient of $G$, i.e., $\pi_0(G)=G/G^o$. Let $\Gamma$ be the free proalgebraic group (over $k$) on the set $X$ with structure map $\iota\colon X\to \Gamma(k)$. Then $\iota_0\colon X\xrightarrow{\iota}\Gamma(k)\to \pi_0(\Gamma)(k)$ converges to~$1$ (see \cite[Lemma 2.12]{Wibmer:FreeProalgebraicGroups}); and from the universal properties of $\iota\colon X\to\Gamma(k)$ and $\Gamma\to\pi_0(\Gamma)$ it follows that $\iota_0$ satisfies the following universal property: If $G$ is a pro-\'{e}tale proalgebraic group and $\varphi\colon X\to G(k)$ a map converging to $1$, then there exists a unique morphism $\psi\colon \pi_0(\Gamma)\to G$ such that
$$
\xymatrix{
X \ar^{\iota_0}[rr] \ar_\varphi[rd] & & \pi_0(\Gamma)(k) \ar^-{\psi_k}[ld] \\
& G(k) &
}
$$
commutes. In the language of \cite{Wibmer:FreeProalgebraicGroups}, this means that $\pi_0(\Gamma)$ is the free pro-$\mathcal{C}$-group (over $k$) on the set $X$, where $\mathcal{C}$ is the class of all \'{e}tale algebraic groups over $k$.

As $k$ is algebraically closed, the category of (abstract) finite groups is equivalent to the category of \'{e}tale algebraic groups (over $k$). This extends to an equivalence between the category of profinite groups and the category of pro-\'{e}tale algebraic groups (over $k$). Under this equivalence $\pi_0(\Gamma)$ corresponds to the free profinite group on the set $X$. The claim now follows from the fact that $\pi_0$ of the absolute differential Galois group of $F$ is the differential Galois group of $\bar F/F$, where $\bar F$ is the algebraic closure of $F$.
\end{proof}

To resolve the remaining case of Matzat's conjecture (when $k$ has finite transcendence degree over ${\mathbb Q}$ it would suffice to show that every differential embedding problem of finite type over $k(x)$ has a solution (\cite[Cor.~3.9]{BachmayrHarbaterHartmannWibmer:FreeDifferentialGaloisGroups}).

\medskip

\footnotesize

\noindent Author information:

\medskip

\noindent Annette Bachmayr (n\'{e}e Maier): 
Institute of Mathematics, University of Mainz, Staudingerweg 9, 55128 Mainz, Germany.
E-mail: {\tt abachmay@uni-mainz.de}

\medskip

\noindent David Harbater: Department of Mathematics, University of Pennsylvania, Philadelphia, PA 19104-6395, USA. E-mail: {\tt harbater@math.upenn.edu}

\medskip

\noindent Julia Hartmann:  Department of Mathematics, University of Pennsylvania, Philadelphia, PA 19104-6395, USA. E-mail: {\tt hartmann@math.upenn.edu}

\medskip

\noindent Michael Wibmer: Institute of Analysis and Number Theory, Graz University of Technology, Kopernikusgasse 24, 8010 Graz, Austria.
E-mail: {\tt wibmer@math.tugraz.at}  

\end{document}